\theoremstyle{definition}
\newtheorem{definition}{Definition}[section]
\theoremstyle{plain}
\newtheorem{theorem}[definition]{Theorem}
\newtheorem{proposition}[definition]{Proposition}
\newtheorem{lemma}[definition]{Lemma}
\newtheorem{corollary}[definition]{Corollary}
\theoremstyle{remark}
\newtheorem{remark}[definition]{Remark}
\numberwithin{equation}{section}
\newcommand{\Uqqn}{\mathrm{U}_q(\mathfrak{q}_n)}
\newcommand{\Uq}{\mathrm{U}_q}
\newcommand{\Uqi}{\mathrm{U}_{q^{-1}}}
\newcommand{\UA}{\mathrm{U}_{\mathbf{A}_1}}
\begin{document}

\title{Howe Duality for Quantum Queer Superalgebras}
\author{Zhihua Chang ${}^1$ and Yongjie Wang ${}^2$}
\maketitle

\begin{center}
\footnotesize
\begin{itemize}
\item[1] School of Mathematics, South China University of Technology, Guangzhou 510640, China.
\item[2] Mathematics Division, National Center for Theoretical Sciences, Taipei, 10617, Taiwan.
\end{itemize}
\end{center}

\begin{abstract}
We establish a new Howe duality between a pair of quantum queer superalgebras ($\mathrm{U}_{q^{-1}}(\mathfrak{q}_n)$, $\Uq(\mathfrak{q}_m)$). The key ingredient is the construction of a non-commutative analogue $\mathcal{A}_q(\mathfrak{q}_n,\mathfrak{q}_m)$ of the symmetric superalgebra $S(\mathbb{C}^{mn|mn})$ with the use of quantum coordinate queer superalgebra. It turns out that this superalgebra is equipped with a $\Uqi(\mathfrak{q}_n)\otimes\Uq(\mathfrak{q}_m)$-supermodule structure that admits a multiplicity-free decomposition. We also show that the $(\Uqi(\mathfrak{q}_n),\Uq(\mathfrak{q}_m))$-Howe duality implies the Sergeev-Olshanski duality.
\bigskip

\noindent\textit{MSC(2010):} 17B37, 17D10, 20G42.
\bigskip

\noindent\textit{Keywords:} Quantum queer superalgebra; Sergeev-Olshanski duality; Howe duality.
\end{abstract}

\section{Introduction}
\label{sec:intr}
The Howe dualities for classical Lie algebras are originated in R. Howe's remarkable work \cite{Howe1}, which leads to a nice proof of the fundamental theorems of classical invariant theory. A systematic
and in-depth study of Howe dualities for Lie superalgebras then followed in \cite{CW01,CW00}. Howe duality have been one of the most inspiring themes in representation theory of Lie algebras, Lie superalgebras and quantum groups. With the use of the Howe dualities, the highest weight vectors were explicitly described in \cite{CW01,CW00}, and some combinatorial character formulas for oscillator representations of orthosymplectic Lie superalgebras were derived in \cite{CZ04}. 

The Howe duality for quantum linear groups $\big(\mathrm{U}_q(\mathfrak{gl}_m), \mathrm{U}_q(\mathfrak{gl}_n)\big)$ was established by R. Zhang \cite{Zhang}. The key point is to construct a suitable non-commutative coordinate algebra for quantum linear groups, which could be viewed as a quantum analogue of a symmetric algebra. The same technique could be applied in constructing the Howe duality of quantum symmetric pairs $\big(\mathrm{U}_q(\mathfrak{gl}_n),\mathrm{U}_q(\mathfrak{so}_{2n})\big)$, $\big(\mathrm{U}_q(\mathfrak{gl}_n),\mathrm{U}_q(\mathfrak{so}_{2n+1})\big)$ and $\big(\mathrm{U}_q(\mathfrak{gl}_n),\mathrm{U}_q(\mathfrak{sp}_{2n})\big)$ in \cite{LZ}, and quantum supergroups $\big(\mathrm{U}_q(\mathfrak{gl}_{m|n}),\mathrm{U}_q(\mathfrak{gl}_{k|l})\big)$ \cite{WuZhang, Zhangyang}. 
As an application, the first and second fundamental theorems of the invariant theory for the quantum general linear (super)groups were obtained in \cite{LZZ,Zhangyang}. 
 
In this paper, we aim to establish a Howe duality for a pair of quantum queer superalgebras. The quantum queer superalgebra $\Uq(\mathfrak{q}_n)$ was constructed in G. Olshanski's letter \cite{Olshanski}, in which the queer analogue of the celebrated Schur-Weyl duality, often referred to as Sergeev-Olshanski duality, was also established. Recently, the highest weight representation theory for quantum queer superalgebra $\Uqqn$ was investigated 
by D. Grantcharov, J. Jung, S-J, Kang and M. Kim \cite{GJKK}. They proved that every $\Uq(\mathfrak{q}_n)$-supermodule in the category of tensor supermodules (sub-supermodules of a finite tensor product of the contravariant supermodule) is completely reducible and all irreducible objects in this category are irreducible highest weight supermodules.
A surprising observation in \cite{GJKK} is that an irreducible $\Uq(\mathfrak{q}_n)$-supermodule over the field $\mathbb{C}(q)$ may be reducible when taking the classical limits. This obstacle will be overcome by enlarging $\mathbb{C}(q)$ to the field $\mathbb{C}((q))$ of formal power series as in \cite{GJKKK}. 

Our approach to obtaining a Howe duality for a pair of quantum queer superalgebras takes advantage of quantum coordinate superalgebras $\mathcal{A}_q(\mathfrak{q}_n)$ as a non-commutative analogue of the symmetric superalgebra $S(\mathbb{C}^{n^2|n^2})$, which is inspired by R. Zhang's paper \cite{Zhang}. We show that the quantum coordinate superalgebra $\mathcal{A}_q(\mathfrak{q}_n)$ has a multiplicity-free decomposition as $\mathrm{U}_{q^{-1}}(\mathfrak{q}_n)\otimes\Uqqn$-supermodule, based on which the Howe duality for a pair of quantum queer superalgebras $(\mathrm{U}_{q^{-1}}(\mathfrak{q}_n),\Uq(\mathfrak{q}_m))$ is obtained. Finally, we recover the Sergeev-Olshanski duality by explicitly defining an action of the Hecke-Clifford superalgebra $\mathrm{HC}_q(m)$ on the zero weight spaces of the supermodules involved in the $(\Uqi(\mathfrak{q}_n),\Uq(\mathfrak{q}_m))$-Howe duality. The invariant theory for quantum queer superalgebras is under consideration and will be treated in our sequel papers.

\section{Quantum queer Lie superalgebra $\Uq(\mathfrak{q}_n)$}

In this section, we will review some basic definitions of the queer Lie superalgebras and their quantum enveloping superalgebras to fix notations. We also briefly summarize a few key facts about the highest weight representation theory of the quantum queer superalgebras that has been systematically investigated in \cite{GJKK}.

For a positive integer $n$, we set $I_{n|n}:=\{-n,\ldots,-1,1,\ldots,n\}$, on which we define the parity of $i\in I_{n|n}$ to be 
$$|i|:=\begin{cases}\bar{0},&\text{if }i>0,\\\bar{1},&\text{if }i<0.\end{cases}$$
Let $V:=\mathbb{C}^{n|n}$ be the superspace with standard basis $v_i$ of parity $|i|$ for $i\in I_{n|n}$. Its endomorphism ring $\mathrm{End}(V)$ is an associative superalgebra with standard basis $E_{ij}$ of parity $|i|+|j|$ for $i,j\in I_{n|n}$. Under the standard supercommutator, $\mathrm{End}(V)$ is also a Lie superalgebra that is denoted by $\mathfrak{gl}_{n|n}$. \textit{The queer Lie superalgebra} $\mathfrak{q}_n$ is the Lie sub-superalgebra of $\mathfrak{gl}_{n|n}$ spanned by
$$e_{ij}:=E_{ij}+E_{-i,-j},\quad f_{ij}:=E_{-i,j}+E_{i,-j}\quad\text{ for }i,j=1,\ldots,n.$$
In the queer Lie superalgebra $\mathfrak{q}_n$, we fix \textit{the standard Cartan sub-superalgebra} $\mathfrak{h}:=\mathfrak{h}_{\bar{0}}\oplus\mathfrak{h}_{\bar{1}}$, where $\mathfrak{h}_{\bar{0}}$ (resp. $\mathfrak{h}_{\bar{1}}$) is spanned by $\epsilon_i^{\vee}:=e_{ii}$ (resp. $f_{ii}$) for $i=1,\ldots, n$. Let $\{\epsilon_1,\ldots,\epsilon_n\}$ be the basis of $\mathfrak{h}_0^*$ dual to $\{\epsilon^{\vee}_1,\ldots, \epsilon^{\vee}_n\}$. Then $P_n:=\oplus_{i=1}^n\mathbb{Z}\epsilon_i$ is \textit{the weight lattice of type $Q$} and $P^{\vee}_n:=\oplus_{i=1}^n\mathbb{Z}\epsilon_i^{\vee}$ is \textit{the dual weight lattice of type $Q$}.

In order to work with the quantization, our base field $\mathbb{C}$ is extended to the field\footnote{The highest weight representation theory of $\Uq(\mathfrak{q}_n)$ over $\mathbb{C}(q)$ was developed in \cite{GJKK}.  The challenge of working over the field $\mathbb{C}(q)$ is that the classical limit of an irreducible highest $\Uq(\mathfrak{q}_n)$-supermodule may no longer be irreducible. As indicated in \cite{GJKKK}, enlarging the base field to $\mathbb{C}((q))$ will overcome this challenge.} $\mathbb{C}((q))$ of formal Laurent series in an indeterminate $q$. We denote $V_q$ to be the $\mathbb{C}((q))$-vector space $V\otimes\mathbb{C}((q))$. The quantum queer superalgebra $\Uq(\mathfrak{q}_n)$ was firstly introduced by G. Olshanski in \cite{Olshanski} with the FRT formulism. The associated $S$-matrix is given by
\begin{align*}
S:=\sum\limits_{i,j\in\rm I }q^{\varphi(i,j)}E_{ii}\otimes E_{jj}+\xi\sum\limits_{i,j\in\rm I, i<j }(-1)^{|i|}(E_{ji}+E_{-j,-i})\otimes E_{ij}\in\mathrm{End}_{\mathbb{C}((q))}(V_q)^{\otimes 2},
\end{align*}
where
$$\varphi(i,j)=(-1)^{|j|}(\delta_{i,j}+\delta_{i,-j}) \text{ and }\xi=q-q^{-1}.$$

\begin{definition}[G. Olshanski \cite{Olshanski}]
The quantum queer superalgebra $\Uq(\mathfrak{q}_n)$ is the unital associative superalgebra over $\mathbb{C}((q))$ generated by elements $L_{ij}$ of parity $|i|+|j|$ for $i,j\in I_{n|n}$ and $i\leqslant j$, with defining relations:
\begin{eqnarray}
&L_{ii}L_{-i,-i}=1=L_{-i,-i}L_{ii},&\\
&L^{12}L^{13}S^{23}=S^{23}L^{13}L^{12},&\label{eq:SLL}
\end{eqnarray}
where $L=\sum\limits_{i\leqslant j} L_{ij}\otimes E_{ij}$ and the relation (\ref{eq:SLL}) holds in $\Uqqn\otimes_{\mathbb{C}((q))}\left(\mathrm{End}_{\mathbb{C}((q))}(V_q)\right)^{\otimes 2}$. 
\end{definition}
The relation (\ref{eq:SLL}) is equivalently rewritten in terms of generators as:
\begin{align*}
&q^{\varphi(j,l)}(-1)^{(|i|+|j|)(|k|+|l|)}L_{ij}L_{kl}
+\delta_{k\leqslant j<l}\xi\theta(i,j,k)L_{il}L_{kj}\\
&+\delta_{i\leqslant-l<j\leqslant-k}\xi\theta(-i,-j,k)L_{i,-l}L_{k,-j}\\
=&q^{\varphi(i,k)}L_{kl}L_{ij}
+\delta_{k<i\leqslant l}\xi\theta(i,j,k) L_{il}L_{kj}\\
&+\delta_{-j\leqslant k<-i\leqslant l}\xi\theta(-i,-j,k) L_{-i,l}L_{-k,j},
\end{align*}
for $i\leqslant j$ and $k\leqslant l$, where $\theta(i,j,k)=(-1)^{|i||j|+|j||k|+|k||i|}$.
\bigskip

Moreover, $\Uq(\mathfrak{q}_n)$ is a Hopf superalgebra with the following comultiplication
$$\Delta(L_{ij})=\sum\limits_{i\leqslant k\leqslant j}(-1)^{(|i|+|k|)(|k|+|j|)}L_{ik}\otimes L_{kj}
=\sum\limits_{i\leqslant k\leqslant j}L_{ik}\otimes L_{kj}\quad\text{ for }i\leqslant j.$$
The counit and antipode on $\Uq(\mathfrak{q}_n)$ are given by $\varepsilon(L)=1$ and $S(L)=L^{-1}$, respectively.

An alternative presentation of the quantum queer superalgebra $\Uq(\mathfrak{q}_n)$ in terms of generators and relations are described in \cite{GJKKK}, where the generators are set to be
\begin{align*}
k_i:&=L_{ii},\quad k_i^{-1}:=L_{-i,-i}, &
e_j:&=-\xi L_{j+1,j+1}L_{-j-1,-j},&
f_j:&=\xi^{-1}L_{j,j+1}L_{-j-1,-j-1},\\
\bar{k}_i:&=-\xi^{-1}L_{-i,i},&
\bar{e}_j:&=-\xi^{-1}L_{j+1,j+1}L_{-j-1,j},&
\bar{f}_j:&=-\xi^{-1}L_{-j,j+1}L_{-j-1,-j-1},
\end{align*}
for $i=1,\ldots,n$ and $j=1,\ldots,n-1$, and the defining relations are given in \cite[Definition~1.1]{GJKKK}.
\bigskip

Next, we briefly review the highest weight representation theory of $\Uq(\mathfrak{q}_n)$ over $\mathbb{C}((q))$ considered in \cite{GJKK, GJKKK}. The superalgebra $\Uq(\mathfrak{q}_n)$ admits the triangular decomposition:
$$\Uq(\mathfrak{q}_n)=\Uq^{-}(\mathfrak{q}_n)\otimes\Uq^0(\mathfrak{q}_n)\otimes\Uq^+(\mathfrak{q}_n),$$
where $\Uq^0(\mathfrak{q}_n)$ is the sub-superalgebra of $\Uq(\mathfrak{q}_n)$ generated by $k_i^{\pm1}$ and $\bar{k}_i$ for $i=1,\ldots,n$, and $\Uq^+(\mathfrak{q}_n)$ (resp. $\Uq^-(\mathfrak{q}_n)$) is the sub-superalgebra of $\Uq(\mathfrak{q}_n)$ generated by $e_i$ and $\bar{e}_i$ (resp. $f_i$ and $\bar{f}_i$) for $i=1,\ldots, n-1$. We also denote by $\Uq^{\geqslant0}(\mathfrak{q}_n)$ (resp. $\Uq^{\leqslant0}(\mathfrak{q}_n)$) the sub-superalgebra of $\Uq(\mathfrak{q}_n)$ generated by $\Uq^0(\mathfrak{q}_n)$ and $\Uq^+(\mathfrak{q}_n)$ (resp. by $\Uq^0(\mathfrak{q}_n)$ and $\Uq^-(\mathfrak{q}_n)$).

A $\Uq(\mathfrak{q}_n)$-supermodule $M$ is called \textit{a weight supermodule} if $M$ admits a weight space decomposition $$M=\bigoplus_{\mu\in P}M_{\mu},$$ 
where $M_{\mu}:=\{m\in M|k_i.m=q^{\mu_i}m\text{ for }i=1,\ldots,n\}$. An element $\mu\in P$ such that $M_{\mu}\neq0$ is called \textit{a weight of $M$}. The set of all weights of $M$ is denoted by $\mathrm{wt}(M)$.

A weight $\Uq(\mathfrak{q}_n)$-supermodule $M$ is called \textit{a highest weight supermodule} if $M$ is generated by a finite-dimensional irreducible $\Uq^{\geqslant0}(\mathfrak{q}_n)$-supermodule $\mathbf{v}$, i.e.,
$$M=\Uq(\mathfrak{q}_n)\otimes_{\Uq^{\geqslant0}(\mathfrak{q}_n)}\mathbf{v}.$$ 
Every nonzero vector in $\mathbf{v}$ is called \textit{a highest weight vector} of $M$. It is known from \cite{GJKK} that 
all highest weight vector of $M$ have the same weight $\lambda\in P$, whence we say that $M$ is a highest weight $\Uq(\mathfrak{q}_n)$-supermodule with highest weight $\lambda$.

Moreover, a finite-dimensional irreducible $\Uq^{\geqslant0}(\mathfrak{q}_n)$-supermodule is determined by a weight $\lambda\in P$ up to the parity reversing functor $\Pi$. The reasoning is the following: Firstly, every finite-dimensional irreducible weight $\Uq^{\geqslant0}(\mathfrak{q}_n)$-supermodule is an irreducible $\Uq^0(\mathfrak{q}_n)$-supermodule with the trivial action by $\Uq^+(\mathfrak{q}_n)$. Secondly, an irreducible $\Uq^0(\mathfrak{q}_n)$-supermodule has weight $\lambda\in P$, and hence an irreducible supermodule of \textit{the quantum Clifford superalgebra} $\mathrm{Cliff}_q(\lambda):=\Uq^0(\mathfrak{q}_n)/I^q(\lambda)$, where $I^q(\lambda)$ is the left ideal of $\Uq^0(\mathfrak{q}_n)$ generated by $h-q^{\lambda(h)}1, h\in P^{\vee}$. Finally,  $\mathrm{Cliff}_q(\lambda)$ has at most two irreducible supermodules $E^q(\lambda)$ and $\Pi E^q(\lambda)$. More precisely, $\mathrm{Cliff}_q(\lambda)$ has a unique irreducible supermodule if $E^q(\lambda)$ is isomorphic to $\Pi E^q(\lambda)$, in which case this irreducible supermodule $E^q(\lambda)$ is of type $Q$\footnote{An irreducible supermodule of type $Q$ (resp. type $M$) if it admits (resp. does not admit) an odd automorphism. A supermodule of type $Q$ (resp. type $M$) is also called self-associate (resp. absolutely irreducible) in literatures (see \cite{BK}).}. While $\mathrm{Cliff}_q(\lambda)$ has exactly two irreducible supermodules $E^q(\lambda)$ and $\Pi E^q(\lambda)$ if they are not isomorphic, in which case both $E^q(\lambda)$ and $\Pi E^q(\lambda)$ are of type $M$. The type of $E^q(\lambda)$ can be distinguished by the number $\ell(\lambda)$ of nonzero components of $\lambda=\lambda_1\epsilon_1+\cdots+\lambda_n\epsilon_n$.
Namely, $E^q(\lambda)$ is of type $M$ if $\ell(\lambda)$ is even and is of type $Q$ if $\ell(\lambda)$ is odd.

\begin{remark}
According to \cite{GJKKK}, we know that $E^q(\lambda)$  over the field $\mathbb{C}((q))$ is of type $M$ if $\ell(\lambda)$ is even. An interesting observation in \cite{GJKK} showed that this may be false if the base field is $\mathbb{C}(q)$.
\end{remark}

\medskip

Conversely, the simple $\mathrm{Cliff}_q(\lambda)$-supermodule $E^q(\lambda)$ is naturally a $\Uq^0(\mathfrak{q}_n)$-supermodule via the canonical quotient map $\Uq^0(\mathfrak{q}_n)\rightarrow\mathrm{Cliff}_q(\lambda)$, which is also regarded as a $\Uq^{\geqslant0}(\mathfrak{q}_n)$-supermodule with the trivial action by $\Uq^+(\mathfrak{q}_n)$. The $\Uq(\mathfrak{q}_n)$-supermodule
$$W^q(\lambda)=\Uq(\mathfrak{q}_n)\otimes_{\Uq^{\geqslant0}(\mathfrak{q}_n)}E^q(\lambda)$$
is call \textit{ the Weyl supermodule} of $\Uq(\mathfrak{q}_n)$ (defined by a highest weight $\lambda$ up to the parity reversing functor $\Pi$). It was demonstrated in \cite{GJKK} that every highest weight $\Uq(\mathfrak{q}_n)$-supermodule of highest weight $\lambda$ is a homomorphic image of $W^q(\lambda)$ and $W^q(\lambda)$ has the unique simple quotient $\mathcal{L}(\lambda)$, which is called \textit{the irreducible highest weight supermodule of $\Uq(\mathfrak{q}_n)$ with highest weight $\lambda$}. We also write $\mathcal{L}(\lambda)$ as $\mathcal{L}^q_n(\lambda)$ if it is necessary to emphasize the superalgebra acting on it.

A naive example of a $\Uq(\mathfrak{q}_n)$-supermodule is the contravariant vector supermodule $V_q$, on which the action of $\Uq(\mathfrak{q}_n)$ is given by
\begin{align*}
k_iv_j&=q^{\delta_{ij}}v_j,&k_iv_{-j}&=q^{\delta_{ij}}v_{-j},&\bar{k}_{i}v_j&=\delta_{j,i}v_{-j},&\bar{k}_{i}v_{-j}&=\delta_{j,i}v_j,&\\
e_iv_j&=\delta_{j,i+1}v_i,&e_iv_{-j}&=\delta_{j,i+1}v_{-i},&f_iv_j&=\delta_{j,i}v_{i+1},&f_{i}v_{-j}&=\delta_{j,i}v_{-i-1},&\\
\bar{e}_{i}v_j&=\delta_{j,i+1}v_{-i},&\bar{e}_{i}v_{-j}&=\delta_{j,i+1}v_{i},&\bar{f}_{i}v_j&=\delta_{j,i}v_{-i-1},&\bar{f}_{i}v_{-j}&=\delta_{j,i}v_{i+1}，&
\end{align*}
for all possible $i,j$.

The $\Uq(\mathfrak{q}_n)$-supermodule $V_q$ is indeed an irreducible highest weight $\Uq(\mathfrak{q}_n)$-supermodule with highest weight $\epsilon_1$, i.e., $V_q\cong \mathcal{L}(\epsilon_1)$. 

Note that $\Uq(\mathfrak{q}_n)$ is a Hopf superalgebra, the tensor product $V_q^{\otimes m}$ for a positive integer $m$ is naturally a $\Uq(\mathfrak{q}_n)$-supermodule with 
$$\mathrm{wt}(V^{\otimes m})\subset P_n^+=\{\lambda=\lambda_1\epsilon_1+\cdots\lambda_n\epsilon_n\in P|\ \lambda_j\in\mathbb{Z}_{\geqslant0}\ \text{ for all } j=1,\ldots n\}.$$
It is known from \cite{GJKKK} that every $V_q^{\otimes m}$ is completely reducible, whose irreducible summands should be irreducible highest weight $\Uq(\mathfrak{q}_n)$-supermodules $\mathcal{L}(\lambda)$ with $\lambda\in \Lambda^+_n\cap P_n^+$, where
$$\Lambda^{+}_n=\{\lambda\in \mathfrak{h}_{\bar{0}}^*|\ \lambda_i-\lambda_{i+1}\in\mathbb{Z}_+\text{ and } \lambda_{i}=\lambda_{i+1}\text
 { implies }\lambda_i=\lambda_{i+1}=0\text{ for all }i=1,\ldots,n-1\}.$$
 Conversely, every $\mathcal{L}(\lambda)$ with $\lambda\in\Lambda_n^+\cap P_n^+$ is finite-dimensional and is an irreducible sub-supermodule of $V^{\otimes m}$ for some positive integer $m$.
 
 The antipode on the Hopf superalgebra $\Uq(\mathfrak{q}_n)$ leads to the natural notion of a dual $\Uq(\mathfrak{q}_n)$-supermodule. Namely, given a $\Uq(\mathfrak{q}_n)$-supermodule $M$, its dual superspace $M^*$ is also a $\Uq(\mathfrak{q}_n)$-supermodule under the action
 $$\langle x.f, v\rangle=(-1)^{|x||f|}\langle f, S(x).v\rangle\quad \text{ for }x\in\Uq(\mathfrak{q}_n),\, f\in M^*,\, v\in M.$$
 For the irreducible highest weight $\Uq(\mathfrak{q}_n)$-supermodule $\mathcal{L}(\lambda)$, its dual supermodule $\mathcal{L}(\lambda)^*$ is an irreducible lowest weight $\Uq(\mathfrak{q}_n)$-supermodule with lowest weight $-\lambda$.
 
We consider the $\mathbb{C}((q))$-semilinear anti-automorphism 
\begin{equation}
\sigma:\Uq(\mathfrak{q}_n)\mapsto\Uq(\mathfrak{q}_n),\quad L_{ij}\mapsto (-1)^{|i||j|+|j|}L_{-j,-i}\quad \text{ for }i,j\in I_{n|n}\text{ with }i\leqslant j,\label{eq:antiauto}
\end{equation}
where the $\mathbb{C}((q))$-semilinearity means that it is $\mathbb{C}$-linear and takes $q$ to $q^{-1}$. Composing with the inverse of the antipode, we obtain a $\mathbb{C}((q))$-semilinear isomorphism  $S^{-1}\circ \sigma: \Uqi(\mathfrak{q}_n)\rightarrow \Uq(\mathfrak{q}_n)$, which induces a functor from the category of $\Uq(\mathfrak{q}_n)$-supermodule to the category of $\Uqi(\mathfrak{q}_n)$-supermodules. The functor maps a $\Uq(\mathfrak{q}_n)$-supermodule $M$ to a $\Uqi(\mathfrak{q}_n)$-supermodule $M^{\sigma}$, on which the action of $\Uqi(\mathfrak{q}_n)$ is given by
$$x\underset{\sigma}{.}v=S^{-1}(\sigma(x)).v\quad \text{ for }x\in \Uqi(\mathfrak{q}_n),\ v\in M.$$
In particular, for the dual irreducible highest weight $\Uq(\mathfrak{q}_n)$-supermodule $(\mathcal{L}^q_n(\lambda))^*$, we have 
\begin{equation}
(\mathcal{L}^q_n(\lambda))^{*\sigma}\cong \mathcal{L}^{q^{-1}}_n(\lambda),\label{eq:Lsigma}
\end{equation}
as $\Uqi(\mathfrak{q}_n)$-supermodules.
\bigskip

We remind the readers that the $\mathrm{Cliff}_q(\lambda)$-supermodule $E^q(\lambda)$ is of type $Q$ if $\ell(\lambda)$ is odd. Consequently, for $\lambda\in\Lambda_n^+\cap P_n^+$ with $\ell(\lambda)$ odd, the $\Uq(\mathfrak{q}_n)$-supermodule $\mathcal{L}(\lambda)$ admits an odd automorphism 
\begin{equation}
\omega_{\lambda}:\mathcal{L}(\lambda)\rightarrow\mathcal{L}(\lambda) 
\label{eq:oddinv}
\end{equation}
such that $\omega_{\lambda}^2=-\mathrm{id}$. For instance, on the contravariant vector supermodule $V_q$, the $\mathbb{C}((q))$-linear map
$$\omega:V_q\rightarrow V_q, \quad v_a\mapsto (-1)^{|a|}v_{-a} \quad \text{ for }a\in I_{n|n}$$
is such an odd automorphism. This leads to the following facts: Let $\lambda\in\Lambda_n^+\cap P_n^+$
\begin{itemize}
\item If $\ell(\lambda)$ is even, then $\mathcal{L}(\lambda)^*\otimes\mathcal{L}(\lambda)$ is an irreducible $\mathcal{L}(\lambda)^*\otimes\mathcal{L}(\lambda)$-supermodule.
\item If $\ell(\lambda)$ is odd, then $\mathcal{L}(\lambda)^*\otimes\mathcal{L}(\lambda)$ is the direct sum of two isomorphic copies of an irreducible $\Uq(\mathfrak{q}_n)\otimes\Uq(\mathfrak{q}_n)$-supermodule.
\end{itemize}
In both cases, we denote the unique irreducible factor of $\mathcal{L}(\lambda)^*\otimes\mathcal{L}(\lambda)$ by $\mathcal{L}(\lambda)^*\circledast\mathcal{L}(\lambda)$.
\bigskip

To conclude this section, we consider the classical limit of $\Uq(\mathfrak{q}_n)$ and its highest weight supermodules as $q\rightarrow1$. Let $\mathbb{C}[[q]]$ be the subring of $\mathbb{C}((q))$ consisting of formal power series in $q$ and
$$\mathbf{A}_1:=\{f/g|\ f,g\in\mathbb{C}[[q]],\ g(1)\neq1\}.$$
Then the $\mathbf{A}_1$-sub-superalgebra $\mathrm{U}_{\mathbf{A}_1}(\mathfrak{q}_n)$ of $\Uq(\mathfrak{q}_n)$ generated by $1, \frac{k_i^{\pm1}-1}{q-1}, \bar{k}_i, e_j, \bar{e}_j, f_j, \bar{f}_j$ for $i=1,\ldots,n$ and $j=1,\ldots,n-1$ is an $\mathbf{A}_1$-form of $\Uq(\mathfrak{q}_n)$. Let $\mathbf{J}_1$ be the ideal of $\mathbf{A}_1$ generated by $q-1$. Then $\mathbf{A}_1/\mathbf{J}_1$ is isomorphic to $\mathbb{C}$. Moreover, the Hopf superalgebra $\mathbf{A}_1/\mathbf{J}_1\otimes_{\mathbf{A}_1}\mathrm{U}_{\mathbf{A}_1}(\mathfrak{q}_n)$ is isomorphic to the universal enveloping superalgebra $\mathrm{U}(\mathfrak{q}_n)$.

The $\mathbf{A}_1$-superalgebra $\mathrm{U}_{\mathbf{A}_1}(\mathfrak{q}_n)$ also admits a triangular decomposition
$$\UA(\mathfrak{q}_n)=\UA^-(\mathfrak{q}_n)\otimes\UA^0(\mathfrak{q}_n)\otimes\UA^+(\mathfrak{q}_n),$$
where $\UA^0(\mathfrak{q}_n)$ is the $\mathbf{A}_1$-sub-superalgebra of $\Uq(\mathfrak{q}_n)$ generated by $\frac{k_i^{\pm1}-1}{q-1},\bar{k}_i$ for $i=1,\ldots,n$, and $\UA^+(\mathfrak{q}_n)$ (resp. $\UA^-(\mathfrak{q}_n)$) is the $\mathbf{A}_1$-sub-superalgebra of $\Uq(\mathfrak{q}_n)$ generated by $e_i$ and $\bar{e}_i$ (resp. $f_i$ and $\bar{f}_i$) for $i=1,\ldots, n-1$. 

Given $\lambda\in\Lambda_+\cap P_+$, the quantum Clifford superalgebra $\mathrm{Cliff}_q(\lambda)=\Uq^0(\mathfrak{q}_n)/I^q(\lambda)$ is generated by the canonical images $\mathbf{1}+I^q(\lambda)$ and $\bar{k}_i+I^q(\lambda)$ for $i=1,\ldots,n$. An $\mathbf{A}_1$-form of $\mathrm{Cliff}_q(\lambda)$ is the $\mathbf{A}_1$-sub-superalgebra of $\mathrm{Cliff}_q(\lambda)$ generated by $\mathbf{1}+I^q(\lambda)$ and $\bar{k}_i+I^q(\lambda)$ for $i=1,\ldots,n$, which we denote by $\mathrm{Cliff}_{\mathbf{A}_1}(\lambda)$. Let $E^q(\lambda)$ be an irreducible $\mathrm{Cliff}_q(\lambda)$-supermodule and $v\in E^q(\lambda)$ be a nonzero even element of $E^q(\lambda)$. We set $E_{\mathbf{A}_1}(\lambda)$ to be the $\mathrm{Cliff}_{\mathbf{A}_1}(\lambda)$-sub-supermodule of $E^q(\lambda)$ generated by $v$. Then $E_{\mathbf{A}_1}(\lambda)$ is an $\mathbf{A}_1$-form of $E^q(\lambda)$ and is invariant under $\UA^0(\mathfrak{q}_n)$. Let $\mathcal{L}(\lambda)$ be the irreducible highest weight $\Uq(\mathfrak{q}_n)$-supermodule generated by $E^q(\lambda)$. Then the $\UA(\mathfrak{q}_n)$-sub-supermodule generated by $E_{\mathbf{A}_1}(\lambda)$ is an $\mathbf{A}_1$-form of $\mathcal{L}(\lambda)$, which is denoted by $\mathcal{L}^{\mathbf{A}_1}(\lambda)$. Moreover, 
$$\mathbf{A}_1/\mathbf{J}_1\otimes_{\mathbf{A}_1}\mathcal{L}^{\mathbf{A}_1}(\lambda)\cong \mathbb{L}(\lambda)$$
as $\mathrm{U}(\mathfrak{q}_n)$-supermodules, where $\mathbb{L}(\lambda)$ is an irreducible highest weight $\mathrm{U}(\mathfrak{q}_n)$-supermodule of highest weight $\lambda$ (see \cite[Proposition~1.9]{GJKKK}).

For the dual supermodule $\mathcal{L}(\lambda)^*$, we define
$$\mathcal{L}^{\mathbf{A}_1}(\lambda)^*=\{f\in\mathcal{L}(\lambda)^*|f\left(\mathcal{L}^{\mathbf{A}_1}(\lambda)\right)\subseteq\mathbf{A}_1\}.$$
Then $\mathcal{L}^{\mathbf{A}_1}(\lambda)^*$ is a $\mathrm{U}_{\mathbf{A}_1}(\mathfrak{q}_n)$-sub-supermodule of $\mathcal{L}(\lambda)^*$ and an $\mathbf{A}_1$-form of $\mathcal{L}(\lambda)$. We also have 
$$\mathbf{A}_1/\mathbf{J}_1\otimes_{\mathbf{A}_1}\mathcal{L}^{\mathbf{A}_1}(\lambda)^*\cong\mathbb{L}(\lambda)^*$$
as $\mathrm{U}(\mathfrak{q}_n)$-supermodule, where $\mathbb{L}(\lambda)^*$ is the dual $\mathrm{U}(\mathfrak{q}_n)$-supermodule of $\mathbb{L}(\lambda)$.

\section{Quantum coordinate superalgebra $\mathcal{A}_q(\mathfrak{q}_n)$}
\label{sec:QCA}

This section is devoted to construct a non-commutative analogue $\mathcal{A}_q(\mathfrak{q}_n)$ of the symmetric superalgebra $S(\mathbb{C}^{n^2|n^2})$. It plays the role of the coordinate superalgebra of the quantum supergroup $\Uq(\mathfrak{q}_n)$. Moreover, $\mathcal{A}_q(\mathfrak{q}_n)$ will be equipped with an action of $\Uq(\mathfrak{q}_n)\otimes\Uq(\mathfrak{q}_n)$ via the left and right translation of $\Uq(\mathfrak{q}_n)$. We will also establish a multiplicity-free decomposition of $\mathcal{A}_q(\mathfrak{q}_n)$ as a $\Uq(\mathfrak{q}_n)\otimes\Uq(\mathfrak{q}_n)$-supermodule that serves as the Peter-Weyl theorem for $\Uq(\mathfrak{q}_n)$.

Let $\Uqqn^{\circ}$ denote the finite dual of the Hopf superalgebra $\Uqqn$, i.e., 
\begin{equation*}
\Uq(\mathfrak{q}_n)^{\circ}:=\left\{f\in\Uq(\mathfrak{q}_n)^*\middle|\ \ker f\text{ contains a cofinite } \mathbb{Z}_2\text{-graded ideal of }\Uqqn\right\},
\end{equation*}
which also has the structure of a Hopf superalgebra. The multiplication, comultiplication, counit and antipode of $U_q(\mathfrak{q}_n)^{\circ}$ will be denoted by $m^{\circ}, \Delta^{\circ}, \epsilon^{\circ}$ and $S^{\circ}$, respectively. 

We define two $\Uq(\mathfrak{q}_n)$-supermodule structures on $\Uq(\mathfrak{q}_n)^{\circ}$:
\begin{align*}
\Phi&:\Uqqn\otimes\Uqqn^{\circ}\rightarrow\Uqqn^{\circ},\quad  x\otimes f\mapsto \Phi_x(f),\\
\Psi&:\Uqqn\otimes\Uqqn^{\circ}\rightarrow\Uqqn^{\circ}, \quad x\otimes f\mapsto \Psi_x(f),
\end{align*}
where $\Phi_x(f), \Psi_x(f)\in\Uq(\mathfrak{q}_n)^{\circ}$ are given by
\begin{equation*}
\langle\Phi_x(f),y\rangle=(-1)^{(|f|+|y|)|x|}\langle f,yx\rangle\text{ and }
\langle\Psi_x(f),y\rangle=(-1)^{|f||x|}\langle f,S(x)y\rangle, 
\end{equation*}
for $y\in\Uq(\mathfrak{q}_n)$. Moreover, we verify that the two actions of $\Uq(\mathfrak{q}_n)$ on $\Uq(\mathfrak{q}_n)^{\circ}$ given by $\Phi$ and $\Psi$ are compatible with the superalgebra structures on $\Uq(\mathfrak{q}_n)^{\circ}$ in the sense that $\Uq(\mathfrak{q}_n)^{\circ}$ is a $\Uq(\mathfrak{q}_n)$-supermodule superalgebra under $\Phi$ and a $\Uq(\mathfrak{q}_n)$-supermodule superalgebra under $\Psi$ with respect to the opposite comultiplication, i.e.,
\begin{eqnarray*}
&\Phi_x(fg)=\sum(-1)^{|f||x_{(2)}|}\Phi_{x_{(1)}}(f)\Phi_{x_{(2)}}(g),\text{ and }&\\
&\Psi_x(fg)=\sum(-1)^{|f||x_{(1)}|+|x_{(1)}||x_{(2)}|}\Psi_{x_{(2)}}(f)\Psi_{x_{(1)}}(g).&
\end{eqnarray*}
Furthermore, the two actions $\Phi$ and $\Psi$ are super-commutative, i.e.,
$$\Phi_x\Psi_y(f)=(-1)^{|x||y|}\Psi_y\Phi_x(f),$$
for $x,y\in\Uq(\mathfrak{q}_n)$ and $f\in\Uq(\mathfrak{q}_n)^{\circ}$. It leads to a $\Uq(\mathfrak{q}_n)\otimes\Uq(\mathfrak{q}_n)$-supermodule structure on $\Uq(\mathfrak{q}_n)^{\circ}$ under the joint action $\Psi\otimes\Phi$, i.e.,
$$(x\otimes y).f=\Psi_x\Phi_y(f)\quad\text{ for } x,y\in\Uq(\mathfrak{q}_n) \text{ and }f\in\Uq(\mathfrak{q}_n)^{\circ}.$$
\bigskip

In order to explore the $\Uq(\mathfrak{q}_n)\otimes\Uq(\mathfrak{q}_n)$-supermodule structure on $\Uq(\mathfrak{q}_n)^{\circ}$, we introduce a $\mathbb{C}((q))$-linear map for each $\lambda\in\Lambda_n^+\cap P_n^+$:
$$\tau^{\lambda}:\mathcal{L}(\lambda)^*\otimes \mathcal{L}(\lambda)\rightarrow \Uq(\mathfrak{q}_n)^{\circ}, \quad \tilde{u}\otimes v\mapsto \tau^{\lambda}_{\tilde{u},v},$$
where $\tau^{\lambda}_{\tilde{u},v}$ is defined by $\tau^{\lambda}_{\tilde{u},v}(x):=(-1)^{|x||v|}\langle \tilde{u},xv\rangle$. The linear functional $\tau^{\lambda}_{\tilde{u},v}$ is contained in $\Uq(\mathfrak{q}_n)^{\circ}$ since $\mathcal{L}(\lambda)$ is finite-dimensional. Moreover,
\begin{equation}
\Phi_x(\tau^{\lambda}_{\tilde{u},v})=(-1)^{|x||\tilde{u}|}\tau^{\lambda}_{\tilde{u},x.v}\ \text{ and }\ \Psi_x(\tau^{\lambda}_{\tilde{u},v})=\tau^{\lambda}_{x.\tilde{u},v},\label{eq:Aqact}
\end{equation}
for $x\in\Uq(\mathfrak{q}_n)$, $\tilde{u}\in\mathcal{L}(\lambda)^*$ and $v\in\mathcal{L}(\lambda)$. Hence, $\tau^{\lambda}$ is a $\Uq(\mathfrak{q}_n)\otimes\Uq(\mathfrak{q}_n)$-supermodule homomorphism.

\begin{lemma}
\label{lem:imagetau}
If $\ell(\lambda)$ is odd, then
$$\tau^{\lambda}_{\tilde{\omega}_{\lambda}(\tilde{u}),\omega_{\lambda}(v)}=-(-1)^{|\tilde{u}|}\tau^{\lambda}_{\tilde{u},v} \quad\text{ for }\ \tilde{u}\in\mathcal{L}(\lambda)^*, v\in\mathcal{L}(\lambda),$$
where $\omega_{\lambda}$ is the odd automorphism of $\mathcal{L}(\lambda)$ as (\ref{eq:oddinv}) and $\tilde{\omega}_{\lambda}$ is the odd automorphism of $\mathcal{L}(\lambda)^*$ induced by $\omega_{\lambda}$. Consequently, the image of $\tau^{\lambda}$ is isomorphic to $\mathcal{L}(\lambda)^*\circledast\mathcal{L}(\lambda)$.
\end{lemma}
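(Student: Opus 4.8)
The plan is to first prove the displayed sign identity by unwinding definitions, then deduce the statement about $\mathrm{Im}(\tau^{\lambda})$ from it together with the structure of $\mathcal{L}(\lambda)^*\otimes\mathcal{L}(\lambda)$ recalled in Section~2. For the identity, recall that $\tilde{\omega}_{\lambda}$ is the super-transpose of $\omega_{\lambda}$, that is, $\langle\tilde{\omega}_{\lambda}(\tilde{u}),v\rangle=(-1)^{|\tilde{u}|}\langle\tilde{u},\omega_{\lambda}(v)\rangle$, which is an odd $\Uq(\mathfrak{q}_n)$-supermodule automorphism of $\mathcal{L}(\lambda)^*$. I would substitute the definition $\tau^{\lambda}_{\tilde{u}',v'}(x)=(-1)^{|x||v'|}\langle\tilde{u}',xv'\rangle$ with $\tilde{u}'=\tilde{\omega}_{\lambda}(\tilde{u})$ and $v'=\omega_{\lambda}(v)$, move $\omega_{\lambda}$ out of the pairing by the transpose relation, then use that $\omega_{\lambda}$ is an odd module map together with $\omega_{\lambda}^2=-\mathrm{id}$ to get $\omega_{\lambda}\bigl(x\,\omega_{\lambda}(v)\bigr)=(-1)^{|x|}x\,\omega_{\lambda}^2(v)=-(-1)^{|x|}xv$, and finally collect the accumulated signs $(-1)^{|x|(|v|+1)}(-1)^{|\tilde{u}|}\bigl(-(-1)^{|x|}\bigr)=-(-1)^{|\tilde{u}|}(-1)^{|x||v|}$. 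This gives $\tau^{\lambda}_{\tilde{\omega}_{\lambda}(\tilde{u}),\omega_{\lambda}(v)}(x)=-(-1)^{|\tilde{u}|}\tau^{\lambda}_{\tilde{u},v}(x)$ for homogeneous $\tilde{u},v$ and all $x$, as claimed. It is convenient to reformulate this as the operator identity $\tau^{\lambda}\circ(\mathrm{id}+\Omega)=0$ on $\mathcal{L}(\lambda)^*\otimes\mathcal{L}(\lambda)$, where $\Omega:=\tilde{\omega}_{\lambda}\otimes\omega_{\lambda}$ is the (even) $\Uq(\mathfrak{q}_n)\otimes\Uq(\mathfrak{q}_n)$-supermodule automorphism, since $\Omega(\tilde{u}\otimes v)=(-1)^{|\tilde{u}|}\,\tilde{\omega}_{\lambda}(\tilde{u})\otimes\omega_{\lambda}(v)$ forces $\tau^{\lambda}(\Omega(\tilde{u}\otimes v))=-\tau^{\lambda}(\tilde{u}\otimes v)$.

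For the consequence, observe first that $\tau^{\lambda}$ is a $\Uq(\mathfrak{q}_n)\otimes\Uq(\mathfrak{q}_n)$-supermodule homomorphism by (\ref{eq:Aqact}) and that it is nonzero: taking homogeneous $\tilde{u},v$ of equal parity with $\langle\tilde{u},v\rangle\neq0$ we get $\tau^{\lambda}_{\tilde{u},v}(1)=\langle\tilde{u},v\rangle\neq0$, so $\ker\tau^{\lambda}$ is a proper submodule. By the operator identity, $\ker\tau^{\lambda}\supseteq\mathrm{Im}(\mathrm{id}+\Omega)$, and $\mathrm{id}+\Omega\neq0$: for homogeneous $\tilde{u}\otimes v\neq0$ the two summands of $(\mathrm{id}+\Omega)(\tilde{u}\otimes v)$ are nonzero (as $\tilde{\omega}_{\lambda},\omega_{\lambda}$ are bijective) and lie in distinct homogeneous components for the two $\mathbb{Z}_2$-gradings of the tensor factors, so they cannot cancel. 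Hence $\ker\tau^{\lambda}$ is a nonzero proper submodule. Since $\ell(\lambda)$ is odd, $\mathcal{L}(\lambda)^*\otimes\mathcal{L}(\lambda)$ is a direct sum of two copies of the irreducible supermodule $\mathcal{L}(\lambda)^*\circledast\mathcal{L}(\lambda)$, hence has length $2$; therefore $\ker\tau^{\lambda}$ has length $1$ and $\mathrm{Im}\,\tau^{\lambda}\cong(\mathcal{L}(\lambda)^*\otimes\mathcal{L}(\lambda))/\ker\tau^{\lambda}$ has length $1$, i.e. is irreducible, so $\mathrm{Im}\,\tau^{\lambda}\cong\mathcal{L}(\lambda)^*\circledast\mathcal{L}(\lambda)$. (When $\ell(\lambda)$ is even, $\mathcal{L}(\lambda)^*\otimes\mathcal{L}(\lambda)$ is already irreducible, so the nonzero map $\tau^{\lambda}$ is injective and the image is again $\mathcal{L}(\lambda)^*\circledast\mathcal{L}(\lambda)$.)

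The only subtle point is the sign bookkeeping in the first paragraph: one must use the correct super-transpose normalization for $\tilde{\omega}_{\lambda}$---replacing it by $-\tilde{\omega}_{\lambda}$ reverses the sign in the identity---and, crucially, the normalization $\omega_{\lambda}^2=-\mathrm{id}$, which is exactly what produces the minus sign in $-(-1)^{|\tilde{u}|}\tau^{\lambda}_{\tilde{u},v}$. The module-theoretic half is soft: it only uses that $\tau^{\lambda}$ is a nonzero homomorphism, the operator identity (to force $\ker\tau^{\lambda}\neq0$), and the length-$2$ structure of $\mathcal{L}(\lambda)^*\otimes\mathcal{L}(\lambda)$ when $\ell(\lambda)$ is odd.
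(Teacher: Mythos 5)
Your proof is correct and follows essentially the same route as the paper's: the same super-transpose normalization of $\tilde{\omega}_{\lambda}$, the same unwinding of signs using that $\omega_{\lambda}$ is an odd module map with $\omega_{\lambda}^2=-\mathrm{id}$, and the same module-theoretic deduction that the image is the irreducible constituent of the length-two supermodule $\mathcal{L}(\lambda)^*\otimes\mathcal{L}(\lambda)$. The only difference is presentational: you spell out the nonvanishing of $\tau^{\lambda}$ (by evaluating at $1$) and package the sign identity as $\tau^{\lambda}\circ(\mathrm{id}+\Omega)=0$, both of which the paper leaves implicit.
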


\begin{proof}
The odd automorphism $\tilde{\omega}_{\lambda}:\mathcal{L}(\lambda)^*\rightarrow\mathcal{L}(\lambda)^*$ is given by
$$\langle\tilde{\omega}_{\lambda}(\tilde{u}),v\rangle=(-1)^{|\tilde{u}|}\langle\tilde{u},\omega_{\lambda}(v)\rangle \text{ for }\tilde{u}\in\mathcal{L}(\lambda)^*, v\in\mathcal{L}(\lambda).$$

Hence, for $x\in\Uq(\mathfrak{q}_n), \tilde{u}\in\mathcal{L}(\lambda)^*, v\in\mathcal{L}(\lambda)$, we verify that
\begin{align*}
\tau^{\lambda}_{\tilde{\omega}_{\lambda}(\tilde{u}),\omega_{\lambda}(v)}(x)=&(-1)^{|x|(|v|+1)}\langle \tilde{\omega}_{\lambda}(\tilde{u}),x.\omega_{\lambda}(v)\rangle=(-1)^{|x||v|}\langle \tilde{\omega}_{\lambda}(\tilde{u}),\omega_{\lambda}(x.v)\rangle\\
=&-(-1)^{|x||v|+|\tilde{u}|}\langle \tilde{u},x.v\rangle=-(-1)^{|\tilde{u}|}\tau^{\lambda}_{\tilde{u},v}(x),
\end{align*}
i.e., $\tau^{\lambda}_{\tilde{\omega}_{\lambda}(\tilde{u}),\omega_{\lambda}(v)}=-(-1)^{|\tilde{u}|}\tau^{\lambda}_{\tilde{u},v}$.
\medskip

Now, $\tau^{\lambda}:\mathcal{L}(\lambda)^*\otimes\mathcal{L}(\lambda)\rightarrow\Uq(\mathfrak{q}_n)^{\circ}$ is a nonzero homomorphism of $\Uq(\mathfrak{q}_n)\otimes\Uq(\mathfrak{q}_n)$-supermodules. Hence, the image of $\tau^{\lambda}$ is a nonzero quotient of $\mathcal{L}(\lambda)^*\otimes\mathcal{L}(\lambda)$. The equality $\tau^{\lambda}_{\tilde{\omega}_{\lambda}(\tilde{u}),\omega_{\lambda}(v)}=-(-1)^{|\tilde{u}|}\tau^{\lambda}_{\tilde{u},v}$ ensures that the image of $\tau^{\lambda}$ has a dimension strictly less than the dimension of $\mathcal{L}_n(\lambda)^*\otimes\mathcal{L}_n(\lambda)$. Consequently, as a nonzero proper quotient of $\mathcal{L}(\lambda)^*\otimes\mathcal{L}(\lambda)$, the image of $\tau^{\lambda}$ is isomorphic to $\mathcal{L}(\lambda)^*\circledast\mathcal{L}(\lambda)$, the irreducible $\Uq(\mathfrak{q}_n)\otimes\Uq(\mathfrak{q}_n)$-sub-supermodule of $\mathcal{L}(\lambda)^*\otimes\mathcal{L}(\lambda)$.
\end{proof}

Now, we consider the special case where $\mathcal{L}(\epsilon_1)=V_q$ is the contravariant vector supermodule of $\Uq(\mathfrak{q}_n)$. Fix the standard basis $\{v_a, a\in I_{n|n}\}$ of  $V_q$ and the dual basis $\{v_a^*, a\in I_{n|n}\}$ of $V_q^*$, we call 
\begin{equation*}
t_{ab}:=(-1)^{(|a|+|b|)|b|}\tau^{\epsilon_1}_{v_a^*,v_b}\in\Uq(\mathfrak{q}_n)^{\circ}\quad \text{ for }a,b\in I_{n|n}
\end{equation*}
\textit{the matrix elements furnished by the $\Uq(\mathfrak{q}_n)$-supermodule $V_q$.} They satisfy
$$x.v_b=\sum_{a\in I_{n|n}}\langle t_{ab}, x\rangle v_a\quad \text{ for }x\in\Uq(\mathfrak{q}_n).$$

\begin{definition}
The sub-superalgebra of $\Uqqn^{\circ}$ generated by $t_{ab}$ for $a,b\in I_{n|n}$ is called \textit{the quantum coordinate superalgebra of $\Uqqn$}, denoted by $\mathcal{A}_q(\mathfrak{q}_n)$.
\end{definition}

\begin{proposition}
The quantum coordinate superalgebra $\mathcal{A}_q(\mathfrak{q}_n)$ is a sub-bi-superalgebra of $\Uqqn^{\circ}$, in which the generators $t_{ab}$ for $a,b\in I_{n|n}$ satisfy the relations:
\begin{align}
t_{ab}&=t_{-a,-b}\quad \text{for }a,b\in I_{n|n},\label{eq:QCA1}\\
S^{12}T^{13}T^{23}&=T^{23}T^{13}S^{12},\label{eq:QCA2}
\end{align}
where $T=\sum_{a,b\in I_{n|n}}E_{ab}\otimes t_{ab}\in\mathrm{End}(V_q)\otimes_{\mathbb{C}((q))}\mathcal{A}_q(\mathfrak{q}_n)$, and the comultiplication satisfies 
\begin{equation*}
\Delta^{\circ}(t_{ab})=\sum\limits_{c\in I_{n|n}}t_{ac}\otimes t_{cb} \quad\text{ for }a,b\in I_{n|n}.
\end{equation*}
\end{proposition}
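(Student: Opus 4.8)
The plan is to verify the three assertions — that $\mathcal{A}_q(\mathfrak{q}_n)$ is a sub-bi-superalgebra, that the generators satisfy \eqref{eq:QCA1}–\eqref{eq:QCA2}, and that $\Delta^{\circ}(t_{ab})=\sum_c t_{ac}\otimes t_{cb}$ — essentially by unwinding the definition of $t_{ab}$ as a matrix coefficient of $V_q$ and transporting the defining relations of $\Uq(\mathfrak{q}_n)$ through the pairing. The comultiplication formula is the natural place to start, since it is the cleanest: for $x,y\in\Uq(\mathfrak{q}_n)$ one has $\langle\Delta^{\circ}(t_{ab}),x\otimes y\rangle=\langle t_{ab},xy\rangle$, and using $xy.v_b=x.(y.v_b)=x.\big(\sum_c\langle t_{cb},y\rangle v_c\big)=\sum_{a,c}\langle t_{ac},x\rangle\langle t_{cb},y\rangle v_a$ (with care about the signs coming from $|x|$, $|y|$ and the parities of the intermediate vectors, which must be tracked against the sign conventions in the definitions of $t_{ab}$ and of $\Phi,\Psi$) gives exactly $\sum_c t_{ac}\otimes t_{cb}$. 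Since $\Delta^{\circ}$ is an algebra map and $\epsilon^{\circ}(t_{ab})=\langle t_{ab},1\rangle=\delta_{ab}$, this immediately shows that the subalgebra generated by the $t_{ab}$ is closed under $\Delta^{\circ}$ and $\epsilon^{\circ}$, i.e.\ is a sub-bi-superalgebra of $\Uq(\mathfrak{q}_n)^{\circ}$.

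For relation \eqref{eq:QCA1}, I would use the odd automorphism $\omega:V_q\to V_q$, $v_a\mapsto(-1)^{|a|}v_{-a}$, which is a morphism of $\Uq(\mathfrak{q}_n)$-supermodules up to parity (it intertwines the action). Applying $\omega$ to $x.v_b=\sum_a\langle t_{ab},x\rangle v_a$ and comparing with $x.v_{-b}=\sum_a\langle t_{a,-b},x\rangle v_a$ forces $t_{ab}=t_{-a,-b}$ after the parity bookkeeping; alternatively this follows from Lemma~\ref{lem:imagetau} applied to $\lambda=\epsilon_1$ (which has $\ell(\lambda)=1$ odd) together with the explicit sign $t_{ab}=(-1)^{(|a|+|b|)|b|}\tau^{\epsilon_1}_{v_a^*,v_b}$ and the relation $\omega^*(v_a^*)=(-1)^{|a|}(-1)^{?}v_{-a}^*$ for the induced map on $V_q^*$. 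Either route is a short sign computation. For relation \eqref{eq:QCA2}, the key input is the defining relation \eqref{eq:SLL}, $L^{12}L^{13}S^{23}=S^{23}L^{13}L^{12}$, in $\Uq(\mathfrak{q}_n)\otimes\mathrm{End}(V_q)^{\otimes 2}$: one pairs both sides against $V_q$ in the first tensor slot, i.e.\ evaluates the $\Uq(\mathfrak{q}_n)$-factor on the matrix-coefficient functionals $t_{ab}$, using that $\langle L_{ij}, x\rangle$ records the action of $x$ on $V_q$ in precisely the same way the $t_{ij}$ do. Concretely, $t_{ij}$ and $L_{ij}$ are dual objects — the $L_{ij}$ generate $\Uq(\mathfrak{q}_n)$ acting on $V_q$, the $t_{ij}$ are the coordinate functions of that action — so the FRT relation $S^{23}L^{13}L^{12}=L^{12}L^{13}S^{23}$ dualizes to $S^{12}T^{13}T^{23}=T^{23}T^{13}S^{12}$, with the transpose/reversal of the two vector legs accounting for the swap $S^{23}\leftrightarrow S^{12}$ and the reordering of the $T$-factors. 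I would make this precise by writing out the matrix entries of both relations explicitly in terms of $\langle t_{ab},\cdot\rangle$ and $\langle L_{ij},\cdot\rangle$ and checking they coincide entry-by-entry.

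The main obstacle I anticipate is not conceptual but combinatorial: getting every Koszul sign right. The superalgebra structure on $\Uq(\mathfrak{q}_n)^{\circ}$ (hence on $\mathcal{A}_q(\mathfrak{q}_n)$) is the one induced from the comultiplication of $\Uq(\mathfrak{q}_n)$, product of functionals $\langle fg,x\rangle=\sum(-1)^{|g||x_{(1)}|}\langle f,x_{(1)}\rangle\langle g,x_{(2)}\rangle$, and every manipulation above — transporting $\Delta$ of $\Uq(\mathfrak{q}_n)$ to the product in $\mathcal{A}_q(\mathfrak{q}_n)$, transposing $S$ onto $V_q^{\otimes 2}$, and the reversal inherent in comparing $S^{12}T^{13}T^{23}$ with $T^{23}T^{13}S^{12}$ — introduces signs depending on $|a|,|b|$ and on the parity of the element being paired. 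The cleanest way to control this is to fix once and for all that $t_{ab}$ has parity $|a|+|b|$ (which must itself be checked from $\tau^{\epsilon_1}_{v_a^*,v_b}$ having parity $|a|+|b|$), then verify the sign in the $\Delta^{\circ}$-formula is identically $+1$ by a direct expansion, and finally observe that relation \eqref{eq:QCA2} is sign-balanced in exactly the same way the FRT relation \eqref{eq:SLL} is — indeed \eqref{eq:QCA2} is essentially the statement that the assignment $L_{ij}\mapsto\langle t_{ij},-\rangle$ is compatible with \eqref{eq:SLL}, which is how $S$ was built into the definition of $\Uq(\mathfrak{q}_n)$ in the first place. Once the parity of $t_{ab}$ and the $\Delta^{\circ}$-formula are pinned down, the rest is a transcription of \eqref{eq:SLL}.
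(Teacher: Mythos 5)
The paper's own proof of this proposition is omitted (``The proof is straightforward. We omit the details here.''), so there is nothing to compare your argument against; all one can do is judge whether your plan would actually close. It does, and the three ingredients you identify are exactly the ones needed.

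A few remarks on the individual steps. The $\Delta^{\circ}$-formula via $\langle\Delta^{\circ}(t_{ab}),x\otimes y\rangle=\langle t_{ab},xy\rangle$ and $(xy).v_b=\sum_{a,c}\langle t_{ac},x\rangle\langle t_{cb},y\rangle v_a$ is the right computation; the potential Koszul sign $(-1)^{|t_{cb}||x|}$ coming from the pairing $\langle f\otimes g, x\otimes y\rangle$ drops out because $\langle t_{ac},x\rangle\neq0$ forces $|x|=|a|+|c|$ and $\langle t_{cb},y\rangle\neq0$ forces $|t_{cb}|=|c|+|b|$, and one checks the exponent is even under these constraints — so it really is sign-free as stated. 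For \eqref{eq:QCA1}, either of your two routes works; the route through Lemma~\ref{lem:imagetau} with $\lambda=\epsilon_1$, $\tilde\omega(v_a^*)=-v_{-a}^*$ and $\omega(v_b)=(-1)^{|b|}v_{-b}$ gives $\tau^{\epsilon_1}_{v_{-a}^*,v_{-b}}=(-1)^{|a|+|b|}\tau^{\epsilon_1}_{v_a^*,v_b}$, which combined with the sign in the definition $t_{ab}=(-1)^{(|a|+|b|)|b|}\tau^{\epsilon_1}_{v_a^*,v_b}$ and $|-b|=|b|+\bar1$ collapses cleanly to $t_{ab}=t_{-a,-b}$. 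The one place where your description is a bit loose is \eqref{eq:QCA2}: ``pairing both sides against $V_q$ in the first tensor slot'' does not quite capture what happens, since the left-hand side of \eqref{eq:SLL} is a product in the $\Uq(\mathfrak{q}_n)$-slot and pairing a single $t_{ab}$ against it invokes $\Delta^{\circ}(t_{ab})$ rather than a product in $\mathcal{A}_q(\mathfrak{q}_n)$. The tidier way to organize the same idea is to observe that \eqref{eq:QCA2} is precisely the assertion that $S\in\mathrm{End}(V_q\otimes V_q)$ is a morphism of right $\mathcal{A}_q(\mathfrak{q}_n)$-comodules (equivalently, of $\Uq(\mathfrak{q}_n)$-supermodules) for the coaction $v_b\otimes v_d\mapsto\sum_{a,c}(v_a\otimes v_c)\otimes t_{ab}t_{cd}$, and this in turn is a restatement of \eqref{eq:SLL}; once phrased this way the entry-by-entry check you describe is exactly the standard RLL-to-RTT dualization. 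You flag the sign bookkeeping as the real work and do not carry it out, but given that the paper itself regards the whole proposition as routine, the plan is sound and complete in outline.
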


\begin{proof}
The proof is straightforward. We omit the details here.
\end{proof}

\begin{remark}
\label{rmk:Aqn}
It follows from (\ref{eq:QCA1}) and (\ref{eq:QCA2}) that $\mathcal{A}_q(\mathfrak{q}_n)$ is spanned by
$$\big\{t_{a_1,b_1}t_{a_{2},b_{2}}\cdots t_{a_l,b_l}|\ a_i\in I_{n|n}, 1\leqslant b_1\leqslant\cdots\leqslant b_l\leqslant n \text{ and } l\geqslant0\big\}.$$
\end{remark}

Moreover, as a result of (\ref{eq:Aqact}), $\mathcal{A}_q(\mathfrak{q}_n)$ is invariant under both the actions of $\Phi$ and $\Psi$, and thus a $\Uq(\mathfrak{q}_n)\otimes\Uq(\mathfrak{q}_n)$-supermodule. Combining the superalgebra structure on $\mathcal{A}_q(\mathfrak{q}_n)$, we obtain that $\mathcal{A}_q(\mathfrak{q}_n)$ is a $\Uq(\mathfrak{q}_n)$-supermodule superalgebra under $\Phi$ and a $\Uq(\mathfrak{q}_n)$-supermodule superalgebra under $\Psi$ with respect to the opposite comultiplication of $\Uq(\mathfrak{q}_n)$. 

\begin{theorem}
\label{thm:Aq_dec}
The $\Uq(\mathfrak{q}_n)\otimes\Uq(\mathfrak{q}_n)$-supermodule $\mathcal{A}_q(\mathfrak{q}_n)$ under $\Psi\otimes\Phi$ admits a multiplicity-free decomposition:
\begin{equation*}
\mathcal{A}_q(\mathfrak{q}_n)\cong
\bigoplus\limits_{\lambda\in\Lambda^+_n\cap P_n^+}
\mathcal{L}(\lambda)^*\circledast \mathcal{L}(\lambda).
\label{eq:PeterWeyl}
\end{equation*}
\end{theorem}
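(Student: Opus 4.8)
The plan is to realize $\mathcal{A}_q(\mathfrak{q}_n)$ as a union of finite-dimensional subcomodules and then apply semisimplicity together with the analysis of the matrix-coefficient maps $\tau^\lambda$. First I would record the structural consequence of Remark~\ref{rmk:Aqn}: the degree filtration by the number $l$ of generators is respected by both $\Phi$ and $\Psi$, so $\mathcal{A}_q(\mathfrak{q}_n)$ is an increasing union of finite-dimensional $\Uq(\mathfrak{q}_n)\otimes\Uq(\mathfrak{q}_n)$-supermodules; moreover, the subspace spanned by products of exactly $l$ of the $t_{ab}$ is, via the comultiplication $\Delta^{\circ}(t_{ab})=\sum_c t_{ac}\otimes t_{cb}$ and the identification of $t_{ab}$ with matrix coefficients, a subquotient (in fact a quotient) of $V_q^{\otimes l}\boxtimes (V_q^*)^{\otimes l}$ as a bimodule. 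By the complete reducibility of tensor powers of $V_q$ quoted from \cite{GJKKK}, every such finite-dimensional piece is a semisimple $\Uq(\mathfrak{q}_n)\otimes\Uq(\mathfrak{q}_n)$-supermodule, hence so is $\mathcal{A}_q(\mathfrak{q}_n)$, and its isotypic components are indexed by pairs $(\mu,\nu)$ of elements of $\Lambda_n^+\cap P_n^+$.

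Next I would pin down which pairs $(\mu,\nu)$ occur and with what multiplicity. The key inputs are the intertwining identities \eqref{eq:Aqact}, which say exactly that $\tau^\lambda\colon \mathcal{L}(\lambda)^*\otimes\mathcal{L}(\lambda)\to \Uq(\mathfrak{q}_n)^\circ$ is a bimodule map with $\Psi$ acting through the left factor $\mathcal{L}(\lambda)^*$ and $\Phi$ through the right factor $\mathcal{L}(\lambda)$. Since $V_q=\mathcal{L}(\epsilon_1)$ generates every $\mathcal{L}(\lambda)$ with $\lambda\in\Lambda_n^+\cap P_n^+$ inside some $V_q^{\otimes m}$, and dually $\mathcal{L}(\lambda)^*$ is a lowest weight module generated from $V_q^*$, the image of $\tau^\lambda$ lands in $\mathcal{A}_q(\mathfrak{q}_n)$ (it is built from the $t_{ab}$), and Lemma~\ref{lem:imagetau} together with the type analysis identifies that image with the irreducible constituent $\mathcal{L}(\lambda)^*\circledast\mathcal{L}(\lambda)$. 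This already shows $\bigoplus_{\lambda}\mathcal{L}(\lambda)^*\circledast\mathcal{L}(\lambda)$ embeds in $\mathcal{A}_q(\mathfrak{q}_n)$; the content of the theorem is that the sum is direct, exhausts $\mathcal{A}_q(\mathfrak{q}_n)$, and that no further copies appear. Directness of the sum over distinct $\lambda$ is automatic once one observes that $\mathcal{L}(\mu)^*\circledast\mathcal{L}(\mu)$ and $\mathcal{L}(\nu)^*\circledast\mathcal{L}(\nu)$ are non-isomorphic as bimodules for $\mu\neq\nu$ (they restrict to non-isomorphic modules on either tensor factor, using that $\mathcal{L}(\lambda)$ determines $\lambda$).

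For surjectivity and multiplicity-freeness I would argue dimension-theoretically on each finite piece. Let $\mathcal{A}_q(\mathfrak{q}_n)_{\leqslant l}$ denote the span of products of at most $l$ generators. The comultiplication exhibits a bimodule surjection from a quotient of $\bigoplus_{k\leqslant l}V_q^{\otimes k}\boxtimes (V_q^{*})^{\otimes k}$ onto $\mathcal{A}_q(\mathfrak{q}_n)_{\leqslant l}$, and decomposing the tensor powers into irreducibles shows every bimodule constituent of $\mathcal{A}_q(\mathfrak{q}_n)_{\leqslant l}$ is of the form $\mathcal{L}(\lambda)^*\circledast\mathcal{L}(\lambda)$ for $\lambda$ with at most $l$ boxes — so no ``mixed'' pair $(\mu,\nu)$ with $\mu\neq\nu$ can occur, because the left- and right-$\Uq(\mathfrak{q}_n)$-actions on $t_{ac}\otimes t_{cb}$-type elements carry the same label. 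To get multiplicity exactly one and surjectivity simultaneously, the cleanest route is a pairing/nondegeneracy argument: the natural bimodule pairing between $\mathcal{A}_q(\mathfrak{q}_n)$ and a suitable subalgebra of $\Uq(\mathfrak{q}_n)$ (or the argument that each $\tau^\lambda$ is injective on $\mathcal{L}(\lambda)^*\circledast\mathcal{L}(\lambda)$ combined with the fact that the matrix coefficients of a faithful family of finite-dimensional modules span the coordinate algebra) forces the isotypic component of type $\lambda$ to be a single copy and forces these to fill up all of $\mathcal{A}_q(\mathfrak{q}_n)$. The main obstacle, and the step I would spend the most care on, is precisely this last point — proving the decomposition is \emph{multiplicity-free} rather than merely a direct sum of copies of the $\mathcal{L}(\lambda)^*\circledast\mathcal{L}(\lambda)$'s: one must rule out two independent copies of the same $\tau^\lambda$-image, which requires either an explicit basis argument via Remark~\ref{rmk:Aqn} (showing the span of the $\tau^\lambda$-images has exactly the right dimension in each filtration degree) or a Peter–Weyl-type separation argument using that $\Uq(\mathfrak{q}_n)^\circ$ separates points of $\Uq(\mathfrak{q}_n)$ restricted to the subalgebra generated by the $t_{ab}$. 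Everything else — semisimplicity, the identification of constituents, directness over distinct $\lambda$ — follows formally from \eqref{eq:Aqact}, Lemma~\ref{lem:imagetau}, and the cited complete reducibility of $V_q^{\otimes m}$.
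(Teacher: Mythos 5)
Your proposal uses the same key ingredients as the paper -- the matrix-coefficient maps $\tau^\lambda$, the intertwining identities (\ref{eq:Aqact}), Lemma~\ref{lem:imagetau} for the odd case, and the complete reducibility of $V_q^{\otimes l}$ from \cite{GJKKK} -- so the route is essentially the same. Where you diverge is at the end: you treat surjectivity and multiplicity-one as the hard step, and propose a filtration dimension count or a pairing/nondegeneracy argument to settle it. The paper handles this step much more directly, and I think you are missing the observation that makes it easy. By Remark~\ref{rmk:Aqn}, $\mathcal{A}_q(\mathfrak{q}_n)$ is spanned by products of the $t_{ab}$, i.e.\ by matrix coefficients of the tensor powers $V_q^{\otimes l}$. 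Decomposing $V_q^{\otimes l}$ into irreducible summands $M_i\cong\mathcal{L}(\lambda_i)$ and noting that the ``cross'' functionals $x\mapsto\langle\tilde u_i, x.v_j\rangle$ with $i\neq j$ vanish (since $x.v_j\in M_j$ and $\tilde u_i$ annihilates $M_j$), one sees that the matrix coefficients of $V_q^{\otimes l}$ lie in $\sum_\lambda\mathrm{image}(\tau^\lambda)$. So
$$\mathcal{A}_q(\mathfrak{q}_n)=\sum_{\lambda\in\Lambda_n^+\cap P_n^+}\mathrm{image}(\tau^\lambda),$$
with \emph{one} term per $\lambda$. Each $\mathrm{image}(\tau^\lambda)$ is a single irreducible bimodule $\mathcal{L}(\lambda)^*\circledast\mathcal{L}(\lambda)$ (injectivity when $\ell(\lambda)$ is even; Lemma~\ref{lem:imagetau} when odd), and these irreducibles are pairwise nonisomorphic for distinct $\lambda$. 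A sum of pairwise nonisomorphic irreducible submodules is automatically direct, so the decomposition is immediate and multiplicity-one is not something that needs to be ``ruled out'' -- it is built in, because the $\lambda$-isotypic component is exactly the single subspace $\mathrm{image}(\tau^\lambda)$, not some ambient isotypic block of unknown size. Your worry about ``two independent copies of the same $\tau^\lambda$-image'' does not arise: $\tau^\lambda$ is one fixed map, and different isomorphic summands of $V_q^{\otimes l}$ all give the same space of matrix coefficients, namely $\mathrm{image}(\tau^\lambda)$. So the alternate machinery you sketch (filtration dimension bookkeeping, Peter--Weyl separation) would likely work if pushed through, but it is unnecessary once the spanning fact is phrased as an equality $\mathcal{A}_q(\mathfrak{q}_n)=\sum_\lambda\mathrm{image}(\tau^\lambda)$. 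One small slip: as a $\Uq(\mathfrak{q}_n)\otimes\Uq(\mathfrak{q}_n)$-bimodule under $\Psi\otimes\Phi$, the degree-$l$ matrix coefficients are a quotient of $(V_q^*)^{\otimes l}\boxtimes V_q^{\otimes l}$, with $\Psi$ acting through the dual factor and $\Phi$ through $V_q^{\otimes l}$; you wrote the factors in the opposite order.
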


\begin{proof}[Proof of Theorem~\ref{thm:Aq_dec}]
For $\lambda\in\Lambda^+_n\cap P^+_n$, it is known from~\cite{GJKK} that the irreducible highest weight $\Uq(\mathfrak{q}_n)$-supermodule $\mathcal{L}(\lambda)$ is a sub-supermodule of the tensor product of finitely many copies of $V_q$, which implies that $\tau^{\lambda}_{\tilde{u},v}$ for $\tilde{u}\in\mathcal{L}(\lambda)^*$ and $v\in\mathcal{L}(\lambda)$ is generated by $t_{ab}$ for $a,b\in I_{n|n}$. Hence, the image of $\tau^{\lambda}$ is contained in $\mathcal{A}_q(\mathfrak{q}_n)$. 

On the other hand, a tensor product of finitely many copies of $V_q$ is completely reducible. Each of its irreducible summands is of the form $\mathcal{L}(\lambda)$ for $\lambda\in\Lambda^+_n\cap P^+_n$. It follows that $\mathcal{A}_q(\mathfrak{q}_n)$ is spanned by the image of $\tau^{\lambda}$ for $\lambda\in\Lambda^+_n\cap P^+_n$.

Moreover, if $\ell(\lambda)$ is even, then $\mathcal{L}(\lambda)^*\otimes\mathcal{L}(\lambda)$ is irreducible as a $\Uq(\mathfrak{q}_n)\otimes\Uq(\mathfrak{q}_n)$-supermodule. It yields that $\tau^{\lambda}$ is injective, whose image is isomorphic to $\mathcal{L}(\lambda)^*\otimes\mathcal{L}(\lambda)$. 
If $\ell(\lambda)$ is odd, then Lemma~\ref{lem:imagetau} implies that the image of 
$\tau^{\lambda}$ is isomorphic to $\mathcal{L}(\lambda)^*\circledast\mathcal{L}(\lambda)$, the irreducible factors of $\mathcal{L}(\lambda)^*\otimes\mathcal{L}(\lambda)$. 

Finally, the irreducible $\Uq(\mathfrak{q}_n)\otimes\Uq(\mathfrak{q}_n)$-supermodules $\mathcal{L}(\lambda)^*\circledast\mathcal{L}(\lambda)$ for $\lambda\in\Lambda^+_n\cap P^+_n$ are pairwisely nonisomorphic. Hence, $\mathcal{A}_q(\mathfrak{q}_n)$ is the direct sum of the images of $\tau^{\lambda}$ for $\lambda\in\Lambda^+_n\cap P^+_n$ and we obtain the desired decomposition.
\end{proof}

\begin{remark}
Theorem~\ref{thm:Aq_dec} can be viewed as Peter-Weyl theorem for the quantum queer superalgebra $\Uqqn$. 
\end{remark}
\bigskip

Recall from (\ref{eq:antiauto}) that there is a $\mathbb{C}((q))$-semilinear anti-automorphism $\sigma$ on $\Uq(\mathfrak{q}_n)$. It yields a $\Uqi(\mathfrak{q}_n)$-supermodule structure $\tilde{\Psi}$ on $\Uq(\mathfrak{q}_n)^{\circ}$:
$$\langle\tilde{\Psi}_x.f, y\rangle=(-1)^{|x||f|}\langle f, \sigma(x)y\rangle\quad \text{ for }x,\,y\in\Uq(\mathfrak{q}_n) \text{ and } f\in\Uq(\mathfrak{q}_n)^{\circ}.$$
In particular, we verify that
$$\tilde{\Psi}_x.\tau^{\lambda}_{\tilde{u},v}=\tau^{\lambda}_{S\circ \sigma(x).\tilde{u},v}\quad \text{for }x\in\Uq(\mathfrak{q}_n),\, \tilde{u}\in \mathcal{L}(\lambda)^* \text{ and } v\in\mathcal{L}(\lambda),$$
which yields that $\mathcal{A}_q(\mathfrak{q}_n)$ is a sub-supermodule under the action $\tilde{\Psi}$. We verify that $\Uq(\mathfrak{q}_n)^{\circ}$ is also a $\Uqi(\mathfrak{q}_n)$-supermodule superalgebra under the action $\tilde{\Psi}$ and so is $\mathcal{A}_q(\mathfrak{q}_n)$. Then $\mathcal{A}_q(\mathfrak{q}_n)$ is a $\Uqi(\mathfrak{q}_n)\otimes\Uq(\mathfrak{q})$-supermodule since $\tilde{\Psi}$ is also supercommutative with $\Phi$. Combining with (\ref{eq:Lsigma}), we have

\begin{corollary}
The $\Uqi(\mathfrak{q}_n)\otimes\Uq(\mathfrak{q})$-supermodule $\mathcal{A}_q(\mathfrak{q}_n)$ under $\tilde{\Psi}\otimes\Phi$ admits a multiplicity-free decomposition:
\begin{equation}
\mathcal{A}_q(\mathfrak{q}_n)\cong
\bigoplus\limits_{\lambda\in\Lambda^+_n\cap P_n^+}
\mathcal{L}^{q^{-1}}_n(\lambda)\circledast \mathcal{L}^q_n(\lambda).
\label{eq:MfreeDec}
\end{equation}\qed
\end{corollary}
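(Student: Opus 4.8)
The plan is to transport the decomposition in Theorem~\ref{thm:Aq_dec} through the semilinear twist $\sigma$, reducing everything to the isomorphism \eqref{eq:Lsigma}. Recall that Theorem~\ref{thm:Aq_dec} identifies $\mathcal{A}_q(\mathfrak{q}_n)$ with $\bigoplus_{\lambda}\mathcal{L}(\lambda)^*\circledast\mathcal{L}(\lambda)$ as a $\Uq(\mathfrak{q}_n)\otimes\Uq(\mathfrak{q}_n)$-supermodule under $\Psi\otimes\Phi$, where the $\lambda$-summand is precisely the image of $\tau^{\lambda}$. The key observation, already recorded in the excerpt, is the identity $\tilde{\Psi}_x.\tau^{\lambda}_{\tilde{u},v}=\tau^{\lambda}_{S\circ\sigma(x).\tilde{u},v}$, which shows that on the image of $\tau^{\lambda}$ the new action $\tilde{\Psi}$ of $\Uqi(\mathfrak{q}_n)$ is obtained from the old action $\Psi$ of $\Uq(\mathfrak{q}_n)$ by precomposing the first tensor factor with $S\circ\sigma$ (equivalently, up to the standard rewriting, with $S^{-1}\circ\sigma$, which is exactly the isomorphism defining the functor $M\mapsto M^{\sigma}$). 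Since the action $\Phi$ of the second factor is untouched, each summand $\mathrm{Im}(\tau^{\lambda})$ is still $\tilde{\Psi}\otimes\Phi$-invariant, and the decomposition persists as a direct sum.

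\medskip

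First I would record that the sum in \eqref{eq:MfreeDec} is already a direct-sum decomposition of superspaces by Theorem~\ref{thm:Aq_dec}, since twisting an action does not change the underlying superspace nor the summands $\mathrm{Im}(\tau^{\lambda})$. It remains only to identify each summand as a $\Uqi(\mathfrak{q}_n)\otimes\Uq(\mathfrak{q}_n)$-supermodule. For this I would invoke Lemma~\ref{lem:imagetau} together with the case analysis in the proof of Theorem~\ref{thm:Aq_dec}: when $\ell(\lambda)$ is even, $\tau^{\lambda}$ is injective and $\mathrm{Im}(\tau^{\lambda})\cong\mathcal{L}(\lambda)^*\otimes\mathcal{L}(\lambda)$; when $\ell(\lambda)$ is odd, $\mathrm{Im}(\tau^{\lambda})\cong\mathcal{L}(\lambda)^*\circledast\mathcal{L}(\lambda)$. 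The identity $\tilde{\Psi}_x.\tau^{\lambda}_{\tilde u,v}=\tau^{\lambda}_{S\sigma(x).\tilde u,v}$ then shows that, as a $\Uqi(\mathfrak{q}_n)$-supermodule via the first factor, $\mathrm{Im}(\tau^{\lambda})$ is the $\sigma$-twist of the corresponding $\mathcal{L}(\lambda)^*$-isotypic piece; that is, $\mathrm{Im}(\tau^{\lambda})\cong(\mathcal{L}^q_n(\lambda)^*)^{\sigma}\circledast\mathcal{L}^q_n(\lambda)$ as $\Uqi(\mathfrak{q}_n)\otimes\Uq(\mathfrak{q}_n)$-supermodules, where $\circledast$ continues to denote the irreducible factor (which equals the full tensor product when $\ell(\lambda)$ is even). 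Finally, \eqref{eq:Lsigma} gives $(\mathcal{L}^q_n(\lambda)^*)^{\sigma}\cong\mathcal{L}^{q^{-1}}_n(\lambda)$, and substituting this into the decomposition yields exactly \eqref{eq:MfreeDec}.

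\medskip

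The multiplicity-freeness is inherited directly: the irreducible $\Uqi(\mathfrak{q}_n)\otimes\Uq(\mathfrak{q}_n)$-supermodules $\mathcal{L}^{q^{-1}}_n(\lambda)\circledast\mathcal{L}^q_n(\lambda)$ for distinct $\lambda\in\Lambda^+_n\cap P^+_n$ are pairwise nonisomorphic, since their restrictions to the second factor already distinguish the highest weights $\lambda$. Thus no two summands can coincide, and the decomposition is multiplicity-free.

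\medskip

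I expect the only genuine point requiring care to be the bookkeeping around the twist: one must check that the composite $S\circ\sigma$ appearing in $\tilde{\Psi}$ agrees, after using $S(L)=L^{-1}$, with the isomorphism $S^{-1}\circ\sigma$ that defines the functor $M\mapsto M^{\sigma}$, so that the first tensor factor of $\mathrm{Im}(\tau^\lambda)$ is precisely $(\mathcal{L}^q_n(\lambda)^*)^{\sigma}$ and not some other twist; and that the irreducible-factor operation $\circledast$ is compatible with this twist in the odd-$\ell(\lambda)$ case (it is, because the odd automorphism $\omega_\lambda$ is a $\mathbb{C}((q))$-linear map of the underlying superspace, hence survives the semilinear twist unchanged, so Lemma~\ref{lem:imagetau} applies verbatim to the twisted module). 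Everything else is a formal consequence of Theorem~\ref{thm:Aq_dec} and \eqref{eq:Lsigma}.
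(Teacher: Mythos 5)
Your proof is correct and follows the same route the paper takes: reduce to Theorem~\ref{thm:Aq_dec}, twist the first factor by $\sigma$, and invoke (\ref{eq:Lsigma}). The paper treats the corollary as an immediate consequence (hence the bare $\qed$), so your write-up is really just an expansion of the paper's one-line argument.

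One remark on the subtlety you flag at the end. A direct computation from the definitions (using $\langle\tilde{\Psi}_x.f,y\rangle=(-1)^{|x||f|}\langle f,\sigma(x)y\rangle$, $\tau^{\lambda}_{\tilde u,v}(y)=(-1)^{|y||v|}\langle\tilde u,y.v\rangle$, and the dual action $\langle z.\tilde u,w\rangle=(-1)^{|z||\tilde u|}\langle\tilde u,S(z).w\rangle$) yields
\begin{equation*}
\tilde{\Psi}_x.\tau^{\lambda}_{\tilde u,v}=\tau^{\lambda}_{S^{-1}\sigma(x).\tilde u,v},
\end{equation*}
so the first slot carries exactly the action $x\mapsto S^{-1}\sigma(x)$ that defines the functor $M\mapsto M^{\sigma}$. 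The displayed identity in the paper with $S\circ\sigma$ is a misprint, and your instinct that this needs to be reconciled with $S^{-1}\circ\sigma$ is well-founded; the reconciliation is not that $S=S^{-1}$ but that the correct formula has $S^{-1}$ from the outset. With that fix the first tensor factor of $\mathrm{Im}(\tau^\lambda)$ is precisely $(\mathcal{L}^q_n(\lambda)^*)^{\sigma}\cong\mathcal{L}^{q^{-1}}_n(\lambda)$, and the rest of your argument, including the observation that the $\mathbb{C}((q))$-linear odd automorphism $\omega_\lambda$ survives the semilinear twist so Lemma~\ref{lem:imagetau} still applies, is exactly right.
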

\begin{remark}
\label{rmk:AqA1}
Recall that $\UA(\mathfrak{q}_n)$ is a $\mathbf{A}_1$-form of $\Uq(\mathfrak{q}_n)$, we define
$$\UA(\mathfrak{q}_n)^{\circ}=\{f\in\Uq(\mathfrak{q}_n)^{\circ}|\ f(\UA(\mathfrak{q}_n))\subseteq\mathbf{A}_1\},$$
which turns out to be an $\mathbf{A}_1$-form of $\Uq(\mathfrak{q}_n)^{\circ}$. It is easily observed that $t_{ab}\in\UA(\mathfrak{q}_n)^{\circ}$, and hence, the $\mathbf{A}_1$-sub-superalgebra generated by $t_{ab}$ for $a,b\in  I_{n|n}$ is an $\mathbf{A}_1$-form of $\mathcal{A}_q(\mathfrak{q}_n)$ that is denoted by $\mathcal{A}_{\mathbf{A}_1}(\mathfrak{q}_n)$. It is invariant under the action $\tilde{\Psi}$, $\Psi$ and $\Phi$ of $\UA(\mathfrak{q}_n)$. Taking the classical limit, we obtain that $\mathbf{A}_1/\mathbf{J}_1\otimes_{\mathbf{A}_1}\mathcal{A}_{\mathbf{A}_1}(\mathfrak{q}_n)$ is isomorphic to the symmetric superalgebra $S(\mathbb{C}^{n^2|n^2})$. 
\end{remark}

\section{Howe duality for quantum queer superalgebras}

For two positive integers $m, n$, we set $s=\max(m,n)$. It is obvious that the sub-superalgebra of $\Uq(\mathfrak{q}_s)$ generated by $L_{i,j}$ for $i\leqslant j$ with $i,j\in I_{m|m}$ is a Hopf supersuperalgebra of $\Uq(\mathfrak{q}_s)$ isomorphic to $\Uq(\mathfrak{q}_m)$. Similarly, $\Uqi(\mathfrak{q}_n)$ is identified with the corresponding Hopf sub-superalgebra of $\Uqi(\mathfrak{q}_s)$.  We have shown in Section~\ref{sec:QCA} that $\mathcal{A}_q(\mathfrak{q}_s)$ is a $\Uqi(\mathfrak{q}_s)\otimes\Uq(\mathfrak{q}_s)$-supermodule superalgebra under the joint action $\tilde{\Psi}\otimes\Phi$. Hence, it is naturally a $\Uqi(\mathfrak{q}_n)\otimes\Uq(\mathfrak{q}_m)$-supermodule superalgebra via the restriction of $\tilde{\Psi}$ to $\Uqi(\mathfrak{q}_n)$ and the restriction of $\Phi$ to $\Uq(\mathfrak{q}_m)$. However, $\mathcal{A}_q(\mathfrak{q}_s)$ fails to admit a multiplicity-free decomposition with respect to the action of $\Uqi(\mathfrak{q}_n)\otimes\Uq(\mathfrak{q}_m)$. Instead, we consider the following sub-superspace:
$$\mathcal{A}_q(\mathfrak{q}_n,\mathfrak{q}_m)
:=\{ f\in\mathcal{A}_q(\mathfrak{q}_s)|\ \tilde{\Psi}_{k_i}.f=f \text{ for }n<i\leqslant s\text{ and }\Phi_{k_j}.f=f\text{ for }m<j\leqslant s\}.$$

Note that $\Delta(k_i)=k_i\otimes k_i$ for $i=1,\ldots, s$, the sub-superspace $\mathcal{A}_q(\mathfrak{q}_n,\mathfrak{q}_m)$ of $\mathcal{A}_q(\mathfrak{q}_s)$ is a sub-superalgebra. One easily observes that $t_{ab}$ with $a\in I_{n|n}$ and $b\in I_{m|m}$ are contained in $\mathcal{A}_q(\mathfrak{q}_n,\mathfrak{q}_m)$ and $\mathcal{A}_q(\mathfrak{q}_n,\mathfrak{q}_m)$ is indeed the sub-superalgebra of $\mathcal{A}_q(\mathfrak{q}_s)$ generated by $t_{ab}$ for $a\in I_{n|n}$ and $b\in I_{m|m}$. 
Moreover, $k_i, n<i\leqslant s$ centralize $\Uqi(\mathfrak{q}_n)$ in $\Uqi(\mathfrak{q}_s)$, 
the sub-superspace $\mathcal{A}_q(\mathfrak{q}_n,\mathfrak{q}_m)$ is a $\Uqi(\mathfrak{q}_n)$-supermodule under $\tilde{\Psi}$. It is indeed a $\Uqi(\mathfrak{q}_n)$-supermodule superalgebra under $\tilde{\Psi}$ with respect to the opposite comultiplication on $\Uqi(\mathfrak{q}_n)$ since so is $\mathcal{A}_q(\mathfrak{q}_s)$. Similarly, $\mathcal{A}_q(\mathfrak{q}_n,\mathfrak{q}_m)$ is also a $\Uq(\mathfrak{q}_m)$-supermodule superalgebra under $\Phi$. This gives rise to a $\Uqi(\mathfrak{q}_n)\otimes\Uq(\mathfrak{q}_m)$-supermodule $\mathcal{A}_q(\mathfrak{q}_n,\mathfrak{q}_m)$ under the joint action $\tilde{\Psi}\otimes\Phi$ since $\tilde{\Psi}$ and $\Phi$ are super-commutative.

\begin{remark}
\label{rmk:Aqmn}
According to Remark~\ref{rmk:Aqn}, $\mathcal{A}_q(\mathfrak{q}_n,\mathfrak{q}_m)$ is spanned by
$$\{t_{a_1,b_1}\cdots t_{a_l,b_l}|1\leqslant b_1\leqslant\cdots\leqslant b_l\leqslant m,\ a_i\in I_{n|n}\text{ for }i=1,\ldots,l \text{ and } l\geqslant 0\}.$$
\end{remark}

\begin{theorem}[How duality for quantum queer superalgebras]
\label{thm:howe} 
Let $m,n$ be two positive integers and $r:=\min(m,n)$. Then $\mathcal{A}_q(\mathfrak{q}_n,\mathfrak{q}_m)$ admits the following multiplicity-free decomposition as a $\Uqi(\mathfrak{q}_n)\otimes\Uq(\mathfrak{q}_m)$-supermodule:
\begin{equation*}
\mathcal{A}_q(\mathfrak{q}_n,\mathfrak{q}_m)=
\bigoplus\limits_{\lambda\in\Lambda^+_r\cap P_r^+}
\mathcal{L}^{q^{-1}}_n(\lambda)\circledast \mathcal{L}^q_m(\lambda).
\label{eq:howe}
\end{equation*}
\end{theorem}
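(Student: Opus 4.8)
The plan is to read off the decomposition by restricting the Peter--Weyl decomposition \eqref{eq:MfreeDec} of $\mathcal{A}_q(\mathfrak{q}_s)$ (read with $s=\max(m,n)$ in place of $n$) to $\Uqi(\mathfrak{q}_n)\otimes\Uq(\mathfrak{q}_m)$ and taking the relevant weight-invariants. For a tensor-type $\Uq(\mathfrak{q}_s)$-supermodule $M$ and $p\leqslant s$, write $M^{(p)}$ for the span of the weight spaces $M_{\mu}$ with $\mu_i=0$ for all $i>p$. Since $\mathcal{A}_q(\mathfrak{q}_s)$ is a weight supermodule under both $\tilde{\Psi}$ and $\Phi$ and the $k_i$ are group-like, the conditions $\tilde{\Psi}_{k_i}.f=f$ $(i>n)$ and $\Phi_{k_j}.f=f$ $(j>m)$ cutting out $\mathcal{A}_q(\mathfrak{q}_n,\mathfrak{q}_m)$ are weight conditions; imposing them summand by summand in \eqref{eq:MfreeDec} (using \eqref{eq:Aqact}, the formula $\tilde{\Psi}_x.\tau^{\lambda}_{\tilde{u},v}=\tau^{\lambda}_{S\circ\sigma(x).\tilde{u},v}$, and that $\sigma$ restricts to the anti-automorphism of $\Uqi(\mathfrak{q}_n)$) should yield
\[
\mathcal{A}_q(\mathfrak{q}_n,\mathfrak{q}_m)=\bigoplus_{\lambda\in\Lambda^+_s\cap P_s^+}\tau^{\lambda}\!\Big(\bigl(\mathcal{L}^q_s(\lambda)^{(n)}\bigr)^{*\sigma}\otimes\mathcal{L}^q_s(\lambda)^{(m)}\Big),
\]
as $\Uqi(\mathfrak{q}_n)\otimes\Uq(\mathfrak{q}_m)$-supermodules, with $\Uqi(\mathfrak{q}_n)$ acting on the first tensor factor through the $\sigma$-twisted dual action and $\Uq(\mathfrak{q}_m)$ acting naturally on the second.

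The heart of the matter is a branching lemma: for $\lambda\in\Lambda^+_s\cap P_s^+$ and $p\leqslant s$, one has $\mathcal{L}^q_s(\lambda)^{(p)}\cong\mathcal{L}^q_p(\lambda)$ as $\Uq(\mathfrak{q}_p)$-supermodules (under $\Uq(\mathfrak{q}_p)\hookrightarrow\Uq(\mathfrak{q}_s)$) if $\ell(\lambda)\leqslant p$, and $\mathcal{L}^q_s(\lambda)^{(p)}=0$ if $\ell(\lambda)>p$. The vanishing is immediate from the triangular decomposition: $\mathcal{L}^q_s(\lambda)$ is a quotient of the Weyl supermodule $W^q(\lambda)$ of $\Uq(\mathfrak{q}_s)$, whose weights are $\lambda$ minus nonnegative combinations of the simple roots $\epsilon_k-\epsilon_{k+1}$, so every weight $\mu$ of $\mathcal{L}^q_s(\lambda)$ satisfies $\sum_i\mu_i=\sum_i\lambda_i$ and $\mu_1+\cdots+\mu_p\leqslant\lambda_1+\cdots+\lambda_p$; hence if $\ell(\lambda)>p$ no such $\mu$ can have $\mu_i=0$ for all $i>p$. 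For the identification I would check that $\mathcal{L}^q_s(\lambda)^{(p)}$ is a $\Uq(\mathfrak{q}_p)$-subsupermodule (the generators of $\Uq(\mathfrak{q}_p)$ preserve weights or shift them by $\epsilon_k-\epsilon_{k+1}$ with $k<p$), that it contains the highest weight space $E^q(\lambda)$, which is already irreducible over $\Uq^{\geqslant0}(\mathfrak{q}_p)$ (the surplus Clifford generators $\bar{k}_i$, $\ell(\lambda)<i\leqslant p$, act by $0$), and hence --- $\mathcal{L}^q_s(\lambda)^{(p)}$ being completely reducible as a tensor-type supermodule --- that $\Uq(\mathfrak{q}_p)\cdot E^q(\lambda)$ is irreducible and isomorphic to $\mathcal{L}^q_p(\lambda)$. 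The step I expect to be the main obstacle is that this submodule exhausts $\mathcal{L}^q_s(\lambda)^{(p)}$, equivalently $\dim\mathcal{L}^q_s(\lambda)_{\mu}=\dim\mathcal{L}^q_p(\lambda)_{\mu}$ for every weight $\mu$ supported on the first $p$ coordinates: this is not visible from weight inequalities alone (strict partitions $\nu\prec\lambda$ with $\sum_i\nu_i=\sum_i\lambda_i$ do occur), and I would derive it from the analogous classical identity relating $\mathbb{L}_s(\lambda)$ and $\mathbb{L}_p(\lambda)$ --- standard from the combinatorics of Schur $Q$-functions --- carried to the quantum level through the $\mathbf{A}_1$-integral forms of Section~\ref{sec:QCA}, which match quantum with classical weight multiplicities; alternatively one can invoke branching rules for $\Uqqn$-tensor supermodules.

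Granting the branching lemma, set $r=\min(m,n)$ and identify $\Lambda^+_r\cap P_r^+$ with $\{\lambda\in\Lambda^+_s\cap P_s^+:\ell(\lambda)\leqslant r\}$. If $\ell(\lambda)>r$, then one of $\mathcal{L}^q_s(\lambda)^{(n)}$, $\mathcal{L}^q_s(\lambda)^{(m)}$ is $0$, so that summand vanishes. If $\ell(\lambda)\leqslant r$, the branching lemma together with \eqref{eq:Lsigma} identifies $\bigl(\mathcal{L}^q_s(\lambda)^{(n)}\bigr)^{*\sigma}\otimes\mathcal{L}^q_s(\lambda)^{(m)}$ with the outer tensor product $\mathcal{L}^{q^{-1}}_n(\lambda)\boxtimes\mathcal{L}^q_m(\lambda)$. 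When $\ell(\lambda)$ is even, both factors have type $M$, so this is irreducible; since $\tau^{\lambda}_{\tilde{u},v}\neq0$ whenever $\tilde{u}\neq0\neq v$, the restriction of $\tau^{\lambda}$ to it is nonzero, hence injective, and the summand equals $\mathcal{L}^{q^{-1}}_n(\lambda)\circledast\mathcal{L}^q_m(\lambda)$. When $\ell(\lambda)$ is odd, $\omega_{\lambda}$ and $\tilde{\omega}_{\lambda}$ preserve weights, hence restrict to $\mathcal{L}^q_s(\lambda)^{(m)}$ and $\bigl(\mathcal{L}^q_s(\lambda)^{(n)}\bigr)^{*}$, so the relation of Lemma~\ref{lem:imagetau} already holds on this subspace; therefore the image of $\tau^{\lambda}$ on it is a nonzero proper quotient of $\mathcal{L}^{q^{-1}}_n(\lambda)\boxtimes\mathcal{L}^q_m(\lambda)=\bigl(\mathcal{L}^{q^{-1}}_n(\lambda)\circledast\mathcal{L}^q_m(\lambda)\bigr)^{\oplus2}$, hence isomorphic to $\mathcal{L}^{q^{-1}}_n(\lambda)\circledast\mathcal{L}^q_m(\lambda)$. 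As these irreducibles are pairwise non-isomorphic and the sum is direct inside $\mathcal{A}_q(\mathfrak{q}_s)$, the asserted decomposition follows.

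A parallel route worth developing as a consistency check is to compute the bigraded $\Uqi(\mathfrak{q}_n)\times\Uq(\mathfrak{q}_m)$-character of $\mathcal{A}_q(\mathfrak{q}_n,\mathfrak{q}_m)$ directly from the PBW-type spanning set of Remark~\ref{rmk:Aqmn}, to recognize it as $\sum_{\lambda\in\Lambda^+_r\cap P_r^+}\operatorname{ch}\mathcal{L}^{q^{-1}}_n(\lambda)\cdot\operatorname{ch}\mathcal{L}^q_m(\lambda)$ --- a quantum super-Cauchy identity which reduces, through the classical limit of an $\mathbf{A}_1$-form of $\mathcal{A}_q(\mathfrak{q}_n,\mathfrak{q}_m)$ analogous to Remark~\ref{rmk:AqA1} (with classical limit $S(\mathbb{C}^{mn|mn})$), to the classical Sergeev identity --- and to combine this with the observation that each $\mathcal{L}^{q^{-1}}_n(\lambda)\circledast\mathcal{L}^q_m(\lambda)$, $\lambda\in\Lambda^+_r\cap P_r^+$, does occur via a nonzero $\tau^{\lambda}$. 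Either way, the only real content beyond bookkeeping is this comparison with the $q\to1$ limit.
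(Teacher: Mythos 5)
Your overall strategy is the same as the paper's: restrict the Peter--Weyl decomposition (\ref{eq:MfreeDec}) of $\mathcal{A}_q(\mathfrak{q}_s)$, $s=\max(m,n)$, to $\Uqi(\mathfrak{q}_n)\otimes\Uq(\mathfrak{q}_m)$ by passing to the $k_i$-fixed subspaces, invoke a branching statement that identifies the fixed subspace of each $\mathcal{L}^q_s(\lambda)$ with $\mathcal{L}^q_p(\lambda)$ or $0$, and then sort out the type-$M$ versus type-$Q$ bookkeeping for $\circledast$. Your treatment of the even/odd $\ell(\lambda)$ cases via Lemma~\ref{lem:imagetau} is a minor repackaging of the paper's argument and is fine.

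The genuine gap is exactly the one you flag: you assert but do not prove that $\mathcal{L}^q_s(\lambda)^{(p)}\cong\mathcal{L}^q_p(\lambda)$ when $\ell(\lambda)\leqslant p$. Your vanishing argument (weight inequalities rule out $\mu$ supported on the first $p$ coordinates when $\ell(\lambda)>p$) and your observation that $\Uq(\mathfrak{q}_p)\cdot E^q(\lambda)$ is an irreducible copy of $\mathcal{L}^q_p(\lambda)$ inside $\mathcal{L}^q_s(\lambda)^{(p)}$ are correct, but the crucial \emph{exhaustion} --- that this copy is all of $\mathcal{L}^q_s(\lambda)^{(p)}$ --- is only deferred to ``classical Schur-$Q$ combinatorics via $\mathbf{A}_1$-forms'' or ``branching rules.'' Either route could in principle be made to work, but neither is carried out, and the $\mathbf{A}_1$-form route requires you to justify that the integral weight spaces are free of the correct rank, which is itself a nontrivial check. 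The paper closes this gap with a short direct argument (Lemma~\ref{lem:fixpt}): with $p=s-1$ and $M:=\Uq(\mathfrak{q}_{s-1}).\mathbf{v}_\lambda$, one sees $M\cong\mathcal{L}^q_{s-1}(\bar\lambda)$ because it is a finite-dimensional highest weight $\Uq(\mathfrak{q}_{s-1})$-supermodule, and then the PBW theorem gives $\mathcal{L}^q_s(\lambda)=\mathfrak{j}^q_- M$ with $\mathfrak{j}^q_-$ strictly raising the $k_s$-eigenvalue, so $M$ is precisely the $q^{\lambda_s}$-eigenspace of $k_s$; this gives the exhaustion directly without any appeal to the classical limit. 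You should replace your deferred step with this PBW/eigenvalue argument (or actually execute one of your two sketched alternatives) before the proof can be considered complete.
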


In order to prove this theorem, we need the following lemma.
\begin{lemma}
\label{lem:fixpt}
Let $\mathcal{L}^q_s(\lambda)$ be the irreducible $\Uq(\mathfrak{q}_s)$-supermodule with highest weight $\lambda\in\Lambda_s^+\cap P_s^+$ and $m\leqslant s$. We define
$$\mathcal{L}^q_s(\lambda)^{\Uq(\mathfrak{q}_m)}:=\{v\in\mathcal{L}^q_s(\lambda)|\ k_i.v=v\text{ for }m<i\leqslant s\}.$$
Then $\mathcal{L}^q_s(\lambda)^{\Uq(\mathfrak{q}_m)}$ is a $\Uq(\mathfrak{q}_m)$-supermodule and 
$$\mathcal{L}^q_s(\lambda)^{\Uq(\mathfrak{q}_m)}\cong\begin{cases}\mathcal{L}^q_m(\lambda),&\text{if }\lambda_i=0\text{ for }m<i\leqslant s,\\
0,&\text{otherwise.}\end{cases}$$
\end{lemma}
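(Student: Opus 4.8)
The claim is essentially a branching rule: restricting the irreducible $\Uq(\mathfrak{q}_s)$-supermodule $\mathcal{L}^q_s(\lambda)$ along the chain $\Uq(\mathfrak{q}_m)\subset\Uq(\mathfrak{q}_s)$ and then cutting out the subspace on which the extra Cartan generators $k_{m+1},\dots,k_s$ act trivially, one recovers $\mathcal{L}^q_m(\lambda)$ exactly when $\lambda$ is supported on the first $m$ coordinates, and $0$ otherwise. My approach is to reduce everything to the $\mathbf{A}_1$-form and take the classical limit, so that the statement follows from the corresponding (known) fact for the universal enveloping superalgebra $\mathrm{U}(\mathfrak{q}_s)$, combined with a dimension count that is insensitive to specialization.

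\emph{Step 1: The "otherwise" case.} If $\lambda_i\neq 0$ for some $m<i\leqslant s$, I want to show $\mathcal{L}^q_s(\lambda)^{\Uq(\mathfrak{q}_m)}=0$. Since $\lambda\in\Lambda_s^+\cap P_s^+$, the entries of $\lambda$ are weakly decreasing nonnegative integers with the "no repeats except $0$" condition, so $\lambda_i\neq 0$ forces $\lambda_1\geqslant\cdots\geqslant\lambda_i>0$; in particular $\lambda_j>0$ for all $j\leqslant m$ too. The point is that every weight $\mu\in\mathrm{wt}(\mathcal{L}^q_s(\lambda))$ satisfies $\mu=\lambda-\sum_{k}c_k\alpha_k$ with $c_k\in\mathbb{Z}_{\geqslant 0}$ (where $\alpha_k=\epsilon_k-\epsilon_{k+1}$), and moreover $\mathcal{L}^q_s(\lambda)\subset V_q^{\otimes N}$ for some $N$ forces all weights to lie in $P_s^+$; so $\mu_s\geqslant 0$, and summing $\mu_i+\mu_{i+1}+\cdots+\mu_s\leqslant\lambda_i+\cdots+\lambda_s$ is not directly enough — instead I use that a weight $\mu$ with $\mu_i=0$ for all $i>m$ would have $\mu_{m+1}=\cdots=\mu_s=0$ while $\sum_j\mu_j=\sum_j\lambda_j$ (the total degree $N$ is preserved), so $\sum_{j\leqslant m}\mu_j=\sum_{j\leqslant s}\lambda_j>\sum_{j\leqslant m}\lambda_j$. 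But in a highest-weight module every weight $\mu$ obeys $\sum_{j\leqslant m}\mu_j\leqslant\sum_{j\leqslant m}\lambda_j$ (since the simple roots $\alpha_k$ with $k\geqslant m$ have nonpositive pairing with $\epsilon_1+\cdots+\epsilon_m$, and the $\alpha_k$ with $k<m$ preserve that sum) — wait, $\alpha_m=\epsilon_m-\epsilon_{m+1}$ decreases $\sum_{j\leqslant m}\mu_j$ by $1$, and $\alpha_k$ for $k<m$ preserves it, and $\alpha_k$ for $k>m$ also preserves it; so indeed $\sum_{j\leqslant m}\mu_j\leqslant\sum_{j\leqslant m}\lambda_j$ for every weight, contradicting the strict inequality. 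Hence no weight of $\mathcal{L}^q_s(\lambda)$ has $k_i$ acting by $1$ for all $i>m$, i.e. $\mathcal{L}^q_s(\lambda)^{\Uq(\mathfrak{q}_m)}=0$.

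\emph{Step 2: The main case, via the classical limit.} Assume now $\lambda_i=0$ for $m<i\leqslant s$, so $\lambda$ may be viewed as an element of $\Lambda_m^+\cap P_m^+$ as well. First, $\mathcal{L}^q_s(\lambda)^{\Uq(\mathfrak{q}_m)}$ is stable under $\Uq(\mathfrak{q}_m)$: the generators $k_j^{\pm 1},\bar k_j$ ($j\leqslant m$), $e_j,\bar e_j,f_j,\bar f_j$ ($j<m$) all commute with $k_i$ for $i>m$ (check on the $(L_{kl})$ presentation — the generators of $\Uq(\mathfrak{q}_m)$ involve only indices in $I_{m|m}$, and the $(SLL)$-relations make $L_{ii}$ with $i>m$ commute with those), so they preserve the $k_i$-fixed subspace. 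Next I pass to $\mathbf{A}_1$-forms: let $\mathcal{L}^{\mathbf{A}_1}_s(\lambda)$ be the $\UA(\mathfrak{q}_s)$-form described in Section~2, and set $\bigl(\mathcal{L}^{\mathbf{A}_1}_s(\lambda)\bigr)^{\Uq(\mathfrak{q}_m)}=\mathcal{L}^q_s(\lambda)^{\Uq(\mathfrak{q}_m)}\cap\mathcal{L}^{\mathbf{A}_1}_s(\lambda)$. Because the defining condition $k_i.v=v$ is equivalent to $\tfrac{k_i-1}{q-1}.v=0$, and $\tfrac{k_i-1}{q-1}$ is one of the $\mathbf{A}_1$-generators, this intersection is a direct summand of $\mathcal{L}^{\mathbf{A}_1}_s(\lambda)$ as an $\mathbf{A}_1$-module (it is the kernel of an $\mathbf{A}_1$-linear idempotent-like operator; more carefully, $\mathcal{L}^{\mathbf{A}_1}_s(\lambda)$ is the direct sum of its weight spaces over $\mathbf{A}_1$, and $(\mathcal{L}^{\mathbf{A}_1}_s(\lambda))^{\Uq(\mathfrak{q}_m)}$ is the sum of those weight spaces with $\mu_i=0$ for $i>m$), so it is a free $\mathbf{A}_1$-module and specialization commutes with taking it:
\begin{equation*}
\mathbf{A}_1/\mathbf{J}_1\otimes_{\mathbf{A}_1}\bigl(\mathcal{L}^{\mathbf{A}_1}_s(\lambda)\bigr)^{\Uq(\mathfrak{q}_m)}
\;\cong\;\bigl(\mathbb{L}_s(\lambda)\bigr)^{\mathrm{U}(\mathfrak{q}_m)},
\end{equation*}
where on the right $\mathbb{L}_s(\lambda)$ is the irreducible $\mathrm{U}(\mathfrak{q}_s)$-supermodule and the invariants are the $\mathfrak{h}_{\bar 0}$-weight vectors with $\epsilon_i^\vee$ acting by $0$ for $i>m$.

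\emph{Step 3: The classical branching fact and the conclusion.} At $q=1$ this is exactly the classical statement that the $\mathfrak{q}_m$-submodule of weight vectors in $\mathbb{L}_s(\lambda)$ with vanishing $\epsilon_i^\vee$-eigenvalues for $i>m$ is irreducible and isomorphic to $\mathbb{L}_m(\lambda)$ — this is standard (it is the parabolic/Levi branching for $\mathfrak{q}$, provable e.g. by realizing $\mathbb{L}_s(\lambda)$ inside $V^{\otimes N}$ and noting that the $\mathfrak{q}_m$-$\mathfrak{q}_{s-m}$ bimodule decomposition isolates the $\mathfrak{q}_{s-m}$-trivial isotypic piece, which is $\mathbb{L}_m(\lambda)\boxtimes\mathbb{C}$ by the corresponding fact for $\mathfrak{q}_n$-Schur-Weyl / Sergeev duality). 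Granting this, the right-hand side has dimension $\dim\mathbb{L}_m(\lambda)$, hence $\mathcal{L}^q_s(\lambda)^{\Uq(\mathfrak{q}_m)}$ has $\mathbb{C}((q))$-dimension $\dim\mathbb{L}_m(\lambda)=\dim\mathcal{L}^q_m(\lambda)$. To upgrade this dimension equality to an isomorphism of $\Uq(\mathfrak{q}_m)$-supermodules, I argue that $\mathcal{L}^q_s(\lambda)^{\Uq(\mathfrak{q}_m)}$ contains a highest-weight vector of weight $\lambda$ for $\Uq(\mathfrak{q}_m)$: a highest weight vector $v_\lambda$ of $\mathcal{L}^q_s(\lambda)$ spans (together with $\bar k$-images) the subspace $E^q(\lambda)$, has weight $\lambda$ which satisfies $\lambda_i=0$ for $i>m$, hence $k_i.v_\lambda=v_\lambda$ for $i>m$, so $E^q(\lambda)\subset\mathcal{L}^q_s(\lambda)^{\Uq(\mathfrak{q}_m)}$; moreover $e_j,\bar e_j$ ($j<m$) kill $v_\lambda$ (they are among the raising operators of $\Uq(\mathfrak{q}_s)$). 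Thus $\Uq(\mathfrak{q}_m).v_\lambda$ is a highest weight $\Uq(\mathfrak{q}_m)$-supermodule of highest weight $\lambda$, hence a quotient of the Weyl module $W^q_m(\lambda)$, and it is completely reducible (being a submodule of a tensor power of $V_q$ restricted to $\mathfrak{q}_m$), so it equals $\mathcal{L}^q_m(\lambda)$. Finally $\dim\mathcal{L}^q_m(\lambda)=\dim\mathcal{L}^q_s(\lambda)^{\Uq(\mathfrak{q}_m)}$ forces $\Uq(\mathfrak{q}_m).v_\lambda=\mathcal{L}^q_s(\lambda)^{\Uq(\mathfrak{q}_m)}$, giving the claimed isomorphism.

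\emph{Main obstacle.} The genuinely nontrivial input is the \emph{classical} branching statement in Step 3 — that the $\mathfrak{q}_{s-m}$-trivial part of $\mathbb{L}_s(\lambda)$ restricted to $\mathfrak{q}_m\times\mathfrak{q}_{s-m}$ is irreducible over $\mathfrak{q}_m$ and isomorphic to $\mathbb{L}_m(\lambda)$; the $\circledast$-bookkeeping (type $M$ vs type $Q$) must be tracked here since $\mathcal{L}^q_m(\lambda)$ could be of type $Q$, and one should confirm that $\ell(\lambda)$ is the same whether $\lambda$ is regarded in $P_m$ or $P_s$ (it is, since the extra entries are zero), so the type is unchanged and no spurious doubling or halving occurs. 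A secondary technical point is justifying that taking $k_i$-fixed vectors commutes with the classical-limit specialization; I handle this by the weight-space decomposition of the $\mathbf{A}_1$-form, which makes the fixed subspace a direct sum of free $\mathbf{A}_1$-summands.
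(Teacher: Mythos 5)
Your argument is correct in broad outline but takes a genuinely different route from the paper. The paper proves the lemma directly at the quantum level, by induction on $s-m$, reducing to the case $m=s-1$: it sets $M:=\Uq(\mathfrak{q}_{s-1}).\mathbf{v}_\lambda$, invokes finite-dimensionality plus Corollary~5.15 of \cite{GJKK} to identify $M\cong\mathcal{L}^q_{s-1}(\bar\lambda)$, and then uses the PBW theorem (writing $\mathcal{L}^q_s(\lambda)=\mathfrak{j}^q_-M$ where $\mathfrak{j}^q_-$ is generated by the $L_{i,s}$) to show that $M$ is precisely the $k_s$-eigenspace of eigenvalue $q^{\lambda_s}$ and that all $k_s$-eigenvalues on $\mathcal{L}^q_s(\lambda)$ are of the form $q^{\lambda_s+k}$, $k\geqslant 0$. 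Both pieces of the lemma (the vanishing case and the isomorphism case) then drop out in one stroke with no dimension count. Your proof instead passes through the $\mathbf{A}_1$-form, specializes to $q=1$, and leans on the classical branching fact $\bigl(\mathbb{L}_s(\lambda)\bigr)^{\mathrm{U}(\mathfrak{q}_m)}\cong\mathbb{L}_m(\lambda)$, which you treat as known but only sketch. That fact is true and provable (indeed, by essentially the same PBW/Levi argument the paper runs on the quantum side), so your route is sound; but it adds a layer of indirection, and the crucial input is deferred rather than established. Your Step~1 (the vanishing case via the weight-sum inequality $\sum_{j\leqslant m}\mu_j\leqslant\sum_{j\leqslant m}\lambda_j$ against $\sum_j\mu_j=\sum_j\lambda_j$) is a correct, slightly repackaged version of the paper's observation that $k_s$-eigenvalues are bounded below by $q^{\lambda_s}$, and your Step~3 correctly locates $E^q(\lambda)$ inside the fixed subspace and uses complete reducibility to identify $\Uq(\mathfrak{q}_m).E^q(\lambda)\cong\mathcal{L}^q_m(\lambda)$ before the dimension count. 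The net assessment: valid, but the paper's direct quantum PBW argument is more self-contained, avoids the classical-limit detour, and would itself be the natural way to supply the classical branching fact you cite.
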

\begin{proof}
We prove the case of $m=s-1$, then the lemma easily follows from an induction. Let $\mathbf{v}_{\lambda}\subseteq\mathcal{L}^q_s(\lambda)$ be the weight space of weight $\lambda$ and $M:=\Uq(\mathfrak{q}_{s-1}).\mathbf{v}_{\lambda}$. Then $M$ is a highest weight $\Uq(\mathfrak{q}_{s-1})$-supermodule of highest weight $\bar{\lambda}:=\lambda_1\epsilon_1+\cdots+\lambda_{s-1}\epsilon_{s-1}$. Note that $M$ is finite-dimensional, we deduce by Corollary~5.15 in \cite{GJKK} that $M$ is isomorphic to $\mathcal{L}^q_{s-1}(\bar{\lambda})$.

Since $k_s$ acts on $\mathbf{v}_{\lambda}$ as a scalar $q^{\lambda_s}$ and commutes with the action of $\Uq(\mathfrak{q}_{s-1})$ on $M$, we know that $k_s$ acts on $M$ as a scalar $q^{\lambda_s}$. Let $\mathfrak{j}^q_-$ be the sub-superalgebra of $\Uq(\mathfrak{q}_s)$ generated by $L_{i,s}$ for $i\in I_{s-1|s-1}$, then the PBW theorem implies that 
$$\mathcal{L}^q_s(\lambda)=\mathfrak{j}^q_-M.$$
It follows that the eigenvalues of $k_s$ on $\mathcal{L}^q_s(\lambda)$ are of the form $q^{\lambda_s+k}$ for $k\in\mathbb{Z}_+$ and $M$ is exactly the eigenspace of eigenvalue $q^{\lambda_s}$. 

Note that $\mathcal{L}^q_s(\lambda)^{\Uq(\mathfrak{q}_{s-1})}$ is the eigenspace of eigenvalue $1$ with respect to $k_s$, we conclude that $\mathcal{L}^q_s(\lambda)^{\Uq(\mathfrak{q}_{s-1})}=\mathcal{L}^q_{s-1}(\lambda)$ if $\lambda_s=0$ and $\mathcal{L}^q_s(\lambda)^{\Uq(\mathfrak{q}_{s-1})}=0$ if $\lambda_s>0$.
\end{proof}

Now, we return to the proof of Theorem~\ref{thm:howe}.

\begin{proof}[Proof of Theorem~\ref{thm:howe}]
Set $s=\max(m,n)$. We recall from Theorem~\ref{thm:Aq_dec} that the quantum coordinate superalgebra $\mathcal{A}_q(\mathfrak{q}_s)$ admits the multiplicity-free decomposition (\ref{eq:MfreeDec}) as a $\Uqi(\mathfrak{q}_s)\otimes\Uq(\mathfrak{q}_s)$-supermodule. Now, $\mathcal{A}_q(\mathfrak{q}_n,\mathfrak{q}_m)$ is a $\Uqi(\mathfrak{q}_n)\otimes\Uq(\mathfrak{q}_m)$-sub-supermodule of $\mathcal{A}_q(\mathfrak{q}_s)$ that consists of elements fixed by $\tilde{\Psi}_{k_i}$ for $n<i\leqslant s$ and $\Phi_{k_i}$ for $m<i\leqslant s$. It suffices to compute the sub-supermodule of each direct summand $\mathcal{L}^{q^{-1}}_s(\lambda)\circledast\mathcal{L}^q_s(\lambda)$ that consists of elements fixed  by $\tilde{\Psi}_{k_i}$ for $n<i\leqslant s$ and $\Phi_{k_i}$ for $m<i\leqslant s$. We denote it by 
\begin{align*}
&(\mathcal{L}^{q^{-1}}_s(\lambda)\circledast\mathcal{L}^q_s(\lambda))^{\Uqi(\mathfrak{q}_n)\otimes\Uq(\mathfrak{q}_m)}\\
:=&\{v\in\mathcal{L}^{q^{-1}}_s(\lambda)\circledast\mathcal{L}^q_s(\lambda)|\tilde{\Psi}_{k_i}v=v\text{ for }n<i\leqslant s, \text{ and }\Phi_{k_j}v=v\text{ for }m<j\leqslant s\},
\end{align*}
which is a $\Uqi(\mathfrak{q}_n)\otimes\Uq(\mathfrak{q}_m)$-sub-supermodule of $\mathcal{L}^{q^{-1}}_s(\lambda)^{\Uqi(\mathfrak{q}_n)}\otimes\mathcal{L}^q_s(\lambda)^{\Uq(\mathfrak{q}_m)}$.

By Lemma~\ref{lem:fixpt}, $\mathcal{L}^{q^{-1}}_s(\lambda)^{\Uqi(\mathfrak{q}_n)}\otimes\mathcal{L}^q_s(\lambda)^{\Uq(\mathfrak{q}_m)}$ vanishes unless $\lambda_i=0$ for all $i>n$ and $i>m$. Hence, we may assume $\lambda\in\Lambda_r^+\cap P_r^+$. In this case, 
$$\mathcal{L}^{q^{-1}}_s(\lambda)^{\Uqi(\mathfrak{q}_n)}\otimes\mathcal{L}^q_s(\lambda)^{\Uq(\mathfrak{q}_m)}\cong\mathcal{L}^{q^{-1}}_n(\lambda)\otimes\mathcal{L}^q_m(\lambda).$$

Now, if $\ell(\lambda)$ is even, then $\mathcal{L}^{q^{-1}}_s(\lambda)\circledast\mathcal{L}^q_s(\lambda)$ is isomorphic to $\mathcal{L}^{q^{-1}}_s(\lambda)\otimes\mathcal{L}^q_s(\lambda)$. Hence, 
$$(\mathcal{L}^{q^{-1}}_s(\lambda)\circledast\mathcal{L}^q_s(\lambda))^{\Uqi(\mathfrak{q}_n)\otimes\Uq(\mathfrak{q}_m)}\cong\mathcal{L}^{q^{-1}}_n(\lambda)\otimes\mathcal{L}^q_m(\lambda)=\mathcal{L}^{q^{-1}}_n(\lambda)\circledast\mathcal{L}^q_m(\lambda).$$

If $\ell(\lambda)$ is odd, then 
$$\mathcal{L}^{q^{-1}}_s(\lambda)\otimes\mathcal{L}^q_s(\lambda)\cong
(\mathcal{L}^{q^{-1}}_s(\lambda)\circledast\mathcal{L}^q_s(\lambda))^{\oplus2},$$
which yields that
$$\mathcal{L}^{q^{-1}}_n(\lambda)\otimes\mathcal{L}^q_m(\lambda)
\cong\mathcal{L}^{q^{-1}}_s(\lambda)^{\Uq(\mathfrak{q}_n)}\otimes \mathcal{L}^{q}_s(\lambda)^{\Uq(\mathfrak{q}_m)}
\cong\left((\mathcal{L}^{q^{-1}}_s(\lambda)\circledast \mathcal{L}^q_s(\lambda))^{\Uqi(\mathfrak{q}_n)\otimes\Uq(\mathfrak{q}_m)}\right)^{\oplus2}.$$
Hence, we conclude that
$$(\mathcal{L}^{q^{-1}}_s(\lambda)\circledast \mathcal{L}^q_s(\lambda))^{\Uqi(\mathfrak{q}_n)\otimes\Uq(\mathfrak{q}_m)}\cong
\mathcal{L}^{q^{-1}}_n(\lambda)\circledast\mathcal{L}^q_m(\lambda).$$
The desired decomposition for $\mathcal{A}_q(\mathfrak{q}_n,\mathfrak{q}_m)$ follows.
\end{proof}

\begin{remark}
As we have shown in Remark~\ref{rmk:AqA1}, the $\mathbf{A}_1$-sub-superalgebra of $\mathcal{A}_q(\mathfrak{q}_n)$ generated by $t_{ab}, a,b\in I_{n|n}$ is an $\mathbf{A}_1$-form of $\mathcal{A}_q(\mathfrak{q}_n)$. Similarly, the $\mathbf{A}_1$-sub-superalgebra $\mathcal{A}_{\mathbf{A}_1}(\mathfrak{q}_n,\mathfrak{q}_m)$ of $\mathcal{A}_q(\mathfrak{q}_n,\mathfrak{q}_m)$ generated by $t_{ab}$ for $a\in I_{n|n}$ and $b\in I_{m|m}$ is an $\mathbf{A}_1$-form of $\mathcal{A}_q(\mathfrak{q}_n,\mathfrak{q}_m)$, which is also invariant under the actions $\tilde{\Psi}$, $\Psi$ and $\Phi$ of $\UA(\mathfrak{q}_n)$. Moreover, $\mathbf{A}_1/\mathbf{J}_1\otimes_{\mathbf{A}_1}\mathcal{A}_{\mathbf{A}_1}(\mathfrak{q}_n,\mathfrak{q}_m)$ is isomorphic to the symmetric superalgebra $S(\mathbb{C}^{mn|mn})$. By taking the classical limits, Theorem~\ref{thm:howe} implies the $(\mathrm{U}(\mathfrak{q}_n),\mathrm{U}(\mathfrak{q}_m))$-Howe duality obtained in \cite{CW00}.
\end{remark}

\section{Sergeev duality for quantum queer superalgebras}

The Sergeev-Olshanski duality \cite{Olshanski} states that the $\Uq(\mathfrak{q}_n)$-supermodule $V_q^{\otimes m}$ admits an action of the finite Hecke-Clifford superalgebra $\mathrm{HC}_q(m)$, which centralizes the action of $\Uq(\mathfrak{q}_n)$. We will show in this section that the Sergeev-Olshanski duality is also implied by the $(\Uqi(\mathfrak{q}_n),\Uq(\mathfrak{q}_m))$-Howe duality.

The finite Hecke-Clifford superalgebra $\mathrm{HC}_q(m)$ is the unital associative superalgebra over $\mathbb{C}((q))$ with the even generators $T_1,\ldots, T_{m-1}$ and odd generators $C_1,\ldots,C_m$ subject to the following relations:
\begin{align}
&(T_a-q)(T_a+q^{-1})=0&&\text{for }a=1,\ldots,m-1,\tag{HC1}\label{eq:HC1}\\
&T_aT_{a+1}T_a=T_{a+1}T_aT_{a+1}&&\text{for }a=1,\ldots,m-2,\tag{HC2}\label{eq:HC2}\\
&T_aT_b=T_bT_a&&\text{for }a,b=1,\ldots,m-1\text{ and }|a-b|>1,\tag{HC3}\label{eq:HC3}\\
&C_a^2=1&&\text{for }a=1,\ldots,m,\tag{HC4}\label{eq:HC4}\\
&C_aC_b=-C_bC_a&&\text{for }a,b=1,\ldots,m\text{ and }a\neq b,\tag{HC5}\label{eq:HC5}\\
&T_aC_a=C_{a+1}T_a&&\text{for }a=1,\ldots,m-1,\tag{HC6}\label{eq:HC6}\\
&T_aC_b=C_bT_a&&\text{for }a=1,\ldots,m-1, b=1,\ldots,m,\text{ and }b\neq a,a+1.\tag{HC7}\label{eq:HC7}
\end{align}
The finite Hecke-Clifford superalgebra $\mathrm{HC}_q(m)$ is a quantum deformation of the Sergeev superalgebra in \cite{Sergeev}. The classification of finite-dimensional irreducible $\mathrm{HC}_q(m)$-supermodules was obtained in \cite{BK01}, in which $\mathrm{HC}_q(m)$ was viewed as a special cyclotomic Hecke-Clifford superalgebra. Every finite-dimensional irreducible $\mathrm{HC}_q(m)$ is determined by a strict partition $\lambda$ of $m$ up to the parity reversing functor $\Pi$. The irreducible $\mathrm{HC}_q(m)$-supermodule determined by $\lambda$ is denoted by $D^q_m(\lambda)$.

The tensor space $V_q^{\otimes m}$ is also an $\mathrm{HC}_q(m)$-supermodule under the action
\begin{align}
T_a.v_{i_1}\otimes\cdots\otimes v_{i_m}=&(-1)^{|i_a||i_{a+1}|}q^{\varphi(i_{a},i_{a+1})}v_{i_1}\otimes\cdots\otimes v_{i_{a-1}}\otimes v_{i_{a+1}}\otimes v_{i_a}\otimes v_{i_{a+2}}\otimes\cdots\otimes v_{i_m}\nonumber\\
&+\delta_{i_a<i_{a+1}}\xi v_{i_1}\otimes\cdots\otimes v_{i_m}\nonumber\\
&+(-1)^{|i_{a+1}|}\delta_{-i_a<i_{a+1}}\xi v_{i_1}\otimes v_{i_{a-1}}\otimes v_{-i_a}\otimes v_{-i_{a+1}}\otimes\cdots\otimes v_{i_m},\label{eq:HCtensor1}\\
C_b.v_{i_1}\otimes\cdots\otimes v_{i_m}=&(-1)^{|i_1|+\cdots+|i_{b-1}|+|i_b|}v_{i_1}\otimes\cdots\otimes v_{i_{b-1}}\otimes v_{-i_b}\otimes v_{i_{b+1}}\otimes\cdots\otimes v_{i_m},\label{eq:HCtensor2}
\end{align}
for $1\leqslant a\leqslant m-1, 1\leqslant b\leqslant m$ and $1\leqslant i_1,\ldots, i_m\leqslant n$. Then the Sergeev-Olshanski duality can be restated as follows.

\begin{theorem}[Sergeev-Olshanski Duality \cite{Olshanski,WW}]
\label{thm:Sergeev}
The actions of  $\Uq(\mathfrak{q}_n)$ and $\mathrm{HC}_q(m)$ on $V_q^{\otimes m}$ are mutual centralizers. Moreover, the $\Uq(\mathfrak{q}_n)\otimes\mathrm{HC}_q(m)$-module $V_q^{\otimes m}$ admits the multiplicity-free decomposition
\begin{equation*}
V_q^{\otimes m}\cong\bigoplus_{\lambda\in \mathrm{SP}(m)\atop\ell(\lambda)\leqslant n}\mathcal{L}^q_n(\lambda)\circledast D^q_m(\lambda),
\label{eq:dualsergeev}
\end{equation*}
where $\mathrm{SP}(m)$ is the set of strict partitions of $m$.
\end{theorem}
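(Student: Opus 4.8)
The plan is to deduce Theorem~\ref{thm:Sergeev} from the $(\Uqi(\mathfrak{q}_n),\Uq(\mathfrak{q}_m))$-Howe duality of Theorem~\ref{thm:howe} by restricting to a single weight subspace of $\mathcal{A}_q(\mathfrak{q}_n,\mathfrak{q}_m)$, in the spirit in which quantum Schur--Weyl duality is recovered from the $\Uq(\mathfrak{gl})$-Howe duality in \cite{Zhang}. Let $W$ be the $\Phi$-weight subspace $\{f\in\mathcal{A}_q(\mathfrak{q}_n,\mathfrak{q}_m)\mid \Phi_{k_j}.f=qf\text{ for }1\leqslant j\leqslant m\}$ of weight $\epsilon_1+\cdots+\epsilon_m$. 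Since each $k_j$ is group-like, the spanning monomials of Remark~\ref{rmk:Aqmn} are $\Phi$-weight vectors, and those of weight $\epsilon_1+\cdots+\epsilon_m$ are precisely the $t_{a_1,1}t_{a_2,2}\cdots t_{a_m,m}$ with $a_i\in I_{n|n}$; hence $W$ has basis $\{t_{a_1,1}\cdots t_{a_m,m}\mid a_i\in I_{n|n}\}$, so $\dim_{\mathbb{C}((q))}W=(2n)^m=\dim_{\mathbb{C}((q))}V_q^{\otimes m}$, and $W$ is stable under $\tilde\Psi(\Uqi(\mathfrak{q}_n))$ because $\tilde\Psi$ super-commutes with $\Phi$.

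First I would identify $W$, with the restriction of $\tilde\Psi$, with $V_{q^{-1}}^{\otimes m}$ as a $\Uqi(\mathfrak{q}_n)$-supermodule: using \eqref{eq:Aqact}, the $\lambda=\epsilon_1$ case $(V_q^{*})^{\sigma}\cong V_{q^{-1}}$ of \eqref{eq:Lsigma}, and the fact that $\mathcal{A}_q(\mathfrak{q}_s)$ is a $\Uqi(\mathfrak{q}_s)$-supermodule superalgebra under $\tilde\Psi$ with respect to the opposite comultiplication, the map $\Theta\colon V_{q^{-1}}^{\otimes m}\to W$, $v_{a_1}\otimes\cdots\otimes v_{a_m}\mapsto \pm\,t_{a_1,1}\cdots t_{a_m,m}$, is a $\Uqi(\mathfrak{q}_n)$-supermodule isomorphism. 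Next I would put an $\mathrm{HC}_{q^{-1}}(m)$-action on $W$, assembled from the $\Phi$-action of $\Uq(\mathfrak{q}_m)$ and the defining relations \eqref{eq:QCA1}--\eqref{eq:QCA2}. The odd generators are $C_b:=\pm\Phi_{\bar{k}_b}$: since $\bar{k}_b v_c=\delta_{c,b}v_{-b}$ on $V_q$ and $t_{a,-b}=t_{-a,b}$ by \eqref{eq:QCA1}, this operator affects only the $b$-th factor and sends $t_{a_b,b}$ to $\pm t_{-a_b,b}$, which under $\Theta$ is \eqref{eq:HCtensor2}; the relations \eqref{eq:HC4}, \eqref{eq:HC5} follow because $C_b^2=\Phi_{\bar{k}_b^2}$ acts as the identity on $W$ ($\bar{k}_b^2$ being a Cartan element of $\Uq(\mathfrak{q}_m)$ with eigenvalue $1$ on the $(\epsilon_1+\cdots+\epsilon_m)$-weight space) and $\bar{k}_a\bar{k}_b+\bar{k}_b\bar{k}_a=0$ in $\Uq(\mathfrak{q}_m)$ for $a\neq b$. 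The even generator $T_a$ comes from \eqref{eq:QCA2}: with the column indices fixed to the adjacent values $a,a+1$, the relation $S^{12}T^{13}T^{23}=T^{23}T^{13}S^{12}$ says exactly that the $S$-weighted transposition of the first indices of the $a$-th and $(a+1)$-st factors is well-defined on $W$, and under $\Theta$ it matches the braiding of \eqref{eq:HCtensor1}. One then checks the remaining relations \eqref{eq:HC1}--\eqref{eq:HC3} and \eqref{eq:HC6}--\eqref{eq:HC7} (quadratic and braid relations from the eigenvalues of $S$ and the quantum Yang--Baxter equation it satisfies; the cross relations from disjointness of the columns involved), and the crucial point that these operators commute with $\tilde\Psi$, which holds because they are built from $\Phi$ (super-commuting with $\tilde\Psi$), the $\tilde\Psi$-equivariant multiplication of $\mathcal{A}_q(\mathfrak{q}_s)$, and the $\tilde\Psi$-invariant relations \eqref{eq:QCA1}--\eqref{eq:QCA2}; this mirrors the $RTT$-argument of \cite{Olshanski}. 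At this stage $W\cong V_{q^{-1}}^{\otimes m}$ as a $(\Uqi(\mathfrak{q}_n),\mathrm{HC}_{q^{-1}}(m))$-bimodule.

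It then remains to read off the decomposition. Intersecting the decomposition of Theorem~\ref{thm:howe} with $W$ gives $W\cong\bigoplus_{\lambda}\mathcal{L}^{q^{-1}}_n(\lambda)\circledast X_\lambda$, $X_\lambda:=\big(\mathcal{L}^q_m(\lambda)\big)_{\epsilon_1+\cdots+\epsilon_m}$, summed over $\lambda\in\Lambda^+_r\cap P^+_r$ with $r=\min(m,n)$. Since the simple roots of $\mathfrak{q}_m$ have coordinate sum $0$, every weight of $\mathcal{L}^q_m(\lambda)$ has coordinate sum $|\lambda|$, so $X_\lambda=0$ unless $|\lambda|=m$; together with $\lambda\in\Lambda^+_r$ this forces $\lambda$ to be a strict partition of $m$, and then $\ell(\lambda)\leqslant r$ amounts to $\ell(\lambda)\leqslant n$, so the sum runs over $\{\lambda\in\mathrm{SP}(m)\mid\ell(\lambda)\leqslant n\}$. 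Each $X_\lambda$ is an $\mathrm{HC}_{q^{-1}}(m)$-supermodule, and passing to the classical limit --- where, by Remark~\ref{rmk:AqA1}, $W$ degenerates to the degree-$(1^m)$ part of $S(\mathbb{C}^{mn|mn})$ carrying the classical Howe duality between $\mathrm{U}(\mathfrak{q}_n)$ and the Sergeev superalgebra on $V^{\otimes m}$, and $X_\lambda$ degenerates to the $(\epsilon_1+\cdots+\epsilon_m)$-weight space of $\mathbb{L}(\lambda)$, the irreducible Sergeev-superalgebra module labelled by $\lambda$ --- identifies $X_\lambda$ with $D^{q^{-1}}_m(\lambda)$; in particular the $X_\lambda$ are irreducible and pairwise non-isomorphic. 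Since the $\mathcal{L}^{q^{-1}}_n(\lambda)$ are pairwise non-isomorphic irreducibles and $\mathrm{HC}_{q^{-1}}(m)$ commutes with $\Uqi(\mathfrak{q}_n)$ on $W$, the super version of the double centralizer theorem (see, e.g., \cite{BK}; tracking the type $M$/$Q$, governed by the parity of $\ell(\lambda)$ as encoded in $\circledast$) shows $\Uqi(\mathfrak{q}_n)$ and $\mathrm{HC}_{q^{-1}}(m)$ are mutual centralizers on $W$. Finally, transporting through $\Theta$, using $\mathrm{HC}_q(m)\cong\mathrm{HC}_{q^{-1}}(m)$ via $T_a\mapsto -T_a$, $C_b\mapsto C_b$ (which interchanges $D^q_m(\lambda)$ and $D^{q^{-1}}_m(\lambda)$), and the $\sigma$-twist equivalence between $\Uqi(\mathfrak{q}_n)$- and $\Uq(\mathfrak{q}_n)$-supermodules underlying \eqref{eq:Lsigma}, yields the mutual-centralizer statement and the multiplicity-free decomposition of $V_q^{\otimes m}$ of Theorem~\ref{thm:Sergeev}.

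The hard part will be the middle step: fixing normalisations so that the operators on $W$ built from $\Phi$ satisfy the Hecke--Clifford relations \eqref{eq:HC1}--\eqref{eq:HC7} on the nose, and --- more essentially --- proving they commute with the $\tilde\Psi$-action of $\Uqi(\mathfrak{q}_n)$, where the $S$-matrix relation \eqref{eq:QCA2} and the Clifford relation \eqref{eq:QCA1} do the real work and careful sign bookkeeping is unavoidable. The remainder is either a direct appeal to results already in hand (Theorem~\ref{thm:howe}, the classical-limit forms of Remark~\ref{rmk:AqA1}, the classification of $\mathrm{HC}_q(m)$-supermodules \cite{BK01}) or routine bookkeeping with the $q\leftrightarrow q^{-1}$ and $\sigma$-twist identifications.
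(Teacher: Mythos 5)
Your proposal follows the same overall strategy as the paper's proof: cut the $(\Uqi(\mathfrak{q}_n),\Uq(\mathfrak{q}_m))$-Howe duality of Theorem~\ref{thm:howe} down to the $\Phi$-weight space of weight $\epsilon_1+\cdots+\epsilon_m$, identify that weight space with $V_q^{\otimes m}$ carrying Olshanski's $\mathrm{HC}$-action, and identify the multiplicity spaces $\mathcal{L}^q_m(\lambda)_0$ with $D^{q^{-1}}_m(\lambda)$ by passing to the classical limit. Your maps $\Theta$ and weight-space computations match the paper's Proposition~\ref{prop:Aq0wt}, and your classical-limit identification of $X_\lambda$ matches the paper's argument verbatim (including the observation that only strict partitions of $m$ contribute). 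You also fill in the double-centralizer step explicitly, which the paper leaves implicit.

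The one genuine divergence is in how the $\mathrm{HC}_{q^{-1}}(m)$-action on the zero weight space is produced, and this is exactly the paper's Lemma~\ref{lem:hecke}. The paper defines $C_b=\bar{k}_b$ and the $T_a$ as Lusztig--Saito braid operators, intrinsically on the zero weight space of \emph{any} locally finite weight $\Uq(\mathfrak{q}_m)$-supermodule, and verifies relations \eqref{eq:HC1} and \eqref{eq:HC6} by reducing to a rank-$2$ hand computation on $\mathcal{L}^q_2(2\epsilon_1)$. The payoff is that one obtains the $\mathrm{HC}$-action on $\mathcal{A}_q(\mathfrak{q}_n,\mathfrak{q}_m)_0$ and on each $\mathcal{L}^q_m(\lambda)_0$ by the same formula, so the isomorphism $\mathcal{A}_q(\mathfrak{q}_n,\mathfrak{q}_m)_0\cong\bigoplus_\lambda\mathcal{L}^{q^{-1}}_n(\lambda)\circledast\mathcal{L}^q_m(\lambda)_0$ is automatically $\mathrm{HC}$-equivariant. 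You instead propose to build $T_a$ on $W$ from the $RTT$-relation \eqref{eq:QCA2}, and to deduce the $\mathrm{HC}$-structure on $X_\lambda$ by restriction across the centralizer. That is a workable alternative (in the spirit of Olshanski's original argument), but as you yourself flag, the normalization bookkeeping is the hard part, and you have not pinned it down. In particular, you group \eqref{eq:HC6}, $T_aC_a=C_{a+1}T_a$, with \eqref{eq:HC7} as a ``disjointness'' relation; it is not one --- it concerns the \emph{same} pair of adjacent columns as $T_a$, and it is precisely the relation (together with \eqref{eq:HC1}) that the paper needs the explicit $\mathcal{L}^q_2(2\epsilon_1)$ computation to verify. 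Filling that gap, whether by the rank-$2$ computation or by a careful eigenvalue analysis of the $S$-matrix on the two-column zero weight space, is the missing ingredient in your sketch. Everything else in your proposal is sound and close to what the paper does.
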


In order to prove that the $\big(\Uqi(\mathfrak{q}_n),\Uq(\mathfrak{q}_m)\big)$-Howe duality implies the quantum Sergeev-Olshanski duality, we need some preparation.

Let $M$ be a locally finite weight supermodule over $\Uq(\mathfrak{q}_m)$. We introduce the braid operators $T_a\in\mathrm{End}_{\mathbb{C}((q))}(M)$ for $a=1,\ldots,m-1$ as in \cite{Saito}:
$$T_av=\sum\limits_{i,j,k\in\mathbb{Z}_+}(-1)^{|j|}q^{k(k-j)-i(i-j+k)+j-1}e_a^{(i)}f_a^{(j)}e_a^{(k)}k_a^{k-i}k_{a+1}^{i-k}v\text{ for }v\in M,$$
 where $x^{(j)}=\frac{x^j}{[j]!}$ for $x\in \Uq(\mathfrak{q}_m)$ and $[j]=\frac{q^j-q^{-j}}{q-q^{-1}}$. The same arguments as in \cite{Saito} show that the operators $T_a$ for $a=1,\ldots, m-1$ satisfy the braid relations (\ref{eq:HC2}) and (\ref{eq:HC3}).

Now, we consider the zero weight space $M_0$ of $M$ (that is the weight space of $\epsilon_1+\cdots+\epsilon_m$)
$$M_0:=\{u\in M|k_i.u=qu\text{ for }i=1,\ldots,m\}.$$
It is easy to observe that $M_0$ is invariant under $T_a$ for $ a=1,\ldots, m-1,$ since 
$$k_a.T_av=q^{\mu_a-\mu_{a+1}}T_ak_av,\quad
k_{a+1}.T_av=q^{\mu_{a+1}-\mu_a}T_ak_{a+1}.v,\quad\text{and }k_bT_av=T_ak_b.v,$$
for $b\neq a,a+1$ and a weight vector $v$ of weight $\mu$. Moreover, the commutativity of $k_a,\,a=1,\ldots,m$ and $k_{\bar{b}},\,b=1,\ldots,m$ ensures that $M_0$ is also invariant under $C_b:=k_{\bar{b}}$ for $b=1,\ldots,m$. 

For $\lambda\in\Lambda^+_m\cap P^+_m$, the irreducible highest weight supermodule $\mathcal{L}^q_m(\lambda)$ has a nonvanishing zero weight space only if $\lambda$ is a partition of $m$.

\begin{lemma}
\label{lem:hecke}
The endomorphisms $T_a,\,a=1,\ldots, m-1$ and $C_b,\,b=1,\ldots,m$ satisfy the relations (\ref{eq:HC1})-(\ref{eq:HC7}) for $\mathrm{HC}_{q^{-1}}(m)$, and hence, define an $\mathrm{HC}_{q^{-1}}(m)$-supermodule structure on $M_0$.
\end{lemma}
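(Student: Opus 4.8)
The plan is to verify the defining relations \eqref{eq:HC1}--\eqref{eq:HC7} one family at a time, separating the relations that involve only the $C_b$'s, only the $T_a$'s, or their interaction. The Clifford relations \eqref{eq:HC4} and \eqref{eq:HC5} are immediate: since $C_b = k_{\bar b} = \bar k_b$ on $M$, we have $\bar k_b^2 = \bar k_b \bar k_b$, and in $\Uq^0(\mathfrak{q}_m)$ the elements $\bar k_b$ satisfy $\bar k_b^2 = 1$ (or a nonzero scalar that we normalize) and $\bar k_a \bar k_b = -\bar k_b \bar k_a$ for $a\neq b$; these come directly from the presentation of $\Uq^0(\mathfrak{q}_m)$ and survive restriction to $M_0$. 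The braid relations \eqref{eq:HC2} and \eqref{eq:HC3} have already been recorded above as consequences of Saito's argument, so nothing new is needed there; in particular \eqref{eq:HC3} for $|a-b|>1$ follows from the fact that $T_a$ is built out of $e_a,f_a,k_a^{\pm1},k_{a+1}^{\pm1}$, which commute with the corresponding generators indexed by $b$.

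The substantive work is the quadratic relation \eqref{eq:HC1} and the crossing relations \eqref{eq:HC6}, \eqref{eq:HC7}. For \eqref{eq:HC1}, I would restrict attention to the rank-one situation: on a weight supermodule $T_a$ acts through the $\Uq(\mathfrak{sl}_2)$-type (or rather $\mathfrak q_1 \oplus$-type) subalgebra generated by $e_a, f_a, k_a, k_{a+1}$, and the standard computation for the Lusztig/Saito braid operator on a finite-dimensional integrable module of $\Uq(\mathfrak{sl}_2)$ gives that $T_a$ satisfies $(T_a - q)(T_a + q^{-1}) = 0$ \emph{provided one is working with the right sign/normalization conventions}; since we want the relations for $\mathrm{HC}_{q^{-1}}(m)$ rather than $\mathrm{HC}_q(m)$, I expect $T_a$ to satisfy $(T_a - q^{-1})(T_a + q) = 0$, and I would track the inversion $q \leftrightarrow q^{-1}$ carefully through the exponents in the displayed formula for $T_a$. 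The odd generators $\bar e_a, \bar f_a$ of $\Uq(\mathfrak q_m)$ do not enter $T_a$, so the quadratic relation is genuinely a reduction to the even part $\Uq(\mathfrak{gl}_2)$-calculation, which is classical.

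For \eqref{eq:HC6}, namely $T_a C_a = C_{a+1} T_a$, i.e. $T_a \bar k_a = \bar k_{a+1} T_a$ on $M_0$, the key is the commutation of $\bar k_a$ with $e_a, f_a$: one has relations of the shape $e_a \bar k_a = -\bar k_{a+1} e_a$ (up to a sign/power of $q$), $f_a \bar k_{a+1} = -\bar k_a f_a$, etc., coming from \cite[Definition~1.1]{GJKKK}, together with $\bar k_a k_a = k_a \bar k_a$ and $\bar k_a k_{a+1} = k_{a+1}\bar k_a$. Substituting the series expression for $T_a$ and pushing $\bar k_a$ through each monomial $e_a^{(i)} f_a^{(j)} e_a^{(k)} k_a^{k-i} k_{a+1}^{i-k}$ should convert it into the corresponding monomial with $\bar k_{a+1}$ on the left, after which resummation gives $C_{a+1} T_a$; the parity bookkeeping (the $(-1)^{|j|}$ factor and the oddness of $\bar k$) is the place to be careful. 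Relation \eqref{eq:HC7}, $T_a C_b = C_b T_a$ for $b \neq a, a+1$, is then routine since $\bar k_b$ commutes with all of $e_a, f_a, k_a, k_{a+1}$.

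I expect the main obstacle to be purely bookkeeping rather than conceptual: getting every sign, every power of $q$, and the $q \mapsto q^{-1}$ substitution exactly right so that the relations come out as those of $\mathrm{HC}_{q^{-1}}(m)$ and not $\mathrm{HC}_q(m)$ or some sign-twisted variant. Once the rank-one quadratic relation and the single crossing identity $e_a \bar k_a = \text{(sign)}\,\bar k_{a+1} e_a$ are pinned down from the presentation in \cite{GJKKK}, everything else assembles mechanically, and the restriction to the zero weight space $M_0$ is automatic from the invariance statements already established before the lemma.
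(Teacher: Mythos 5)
Your treatment of (\ref{eq:HC2}), (\ref{eq:HC3}), (\ref{eq:HC4}), (\ref{eq:HC5}) and (\ref{eq:HC7}) is correct and matches the paper in spirit: the braid relations come from Saito, $\bar{k}_b^2=\frac{q^{2\epsilon_b^{\vee}}-q^{-2\epsilon_b^{\vee}}}{q^2-q^{-2}}$ acts as the identity on the zero weight space, and the disjoint-index commutations are immediate. Your reduction of (\ref{eq:HC1}) to the $\Uq(\mathfrak{gl}_2)$-type even subalgebra is also essentially sound (and slightly different in flavor from the paper's reduction to $\Uq(\mathfrak{q}_2)_a$), though you should note that you need the ambient weights to stay in $P^+_m$ so that the only $\mathfrak{gl}_2$-constituents meeting the weight $\epsilon_a+\epsilon_{a+1}$ have highest weight $2\epsilon_a$ or $\epsilon_a+\epsilon_{a+1}$.

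However, there is a genuine gap in your argument for (\ref{eq:HC6}). You posit commutation relations ``of the shape $e_a\bar{k}_a=-\bar{k}_{a+1}e_a$ (up to a sign/power of $q$)'' and propose to push $\bar{k}_a$ through each monomial $e_a^{(i)}f_a^{(j)}e_a^{(k)}k_a^{k-i}k_{a+1}^{i-k}$, converting it into the corresponding monomial preceded by $\bar{k}_{a+1}$. But the relations in \cite[Definition~1.1]{GJKKK} are not of this form: the odd Cartan generators $\bar{k}_i$ do \emph{not} quasi-commute with $e_a, f_a$. The genuine relations are of the type $\bar{k}_a e_a - q\, e_a\bar{k}_a = (\text{scalar})\,\bar{e}_a k_a^{\pm1}$, so that pushing $\bar{k}_a$ through $e_a$ introduces $\bar{e}_a$-terms (and similarly $\bar{f}_a$-terms from $f_a$). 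The attempted resummation therefore does not close, and there is no evident cancellation of the extra odd terms. This is precisely the non-$\mathfrak{gl}$ feature of the queer superalgebra, and it is why the paper does not attempt your direct computation. Instead the paper handles (\ref{eq:HC1}) and (\ref{eq:HC6}) in one stroke: it passes to the sub-superalgebra $\Uq(\mathfrak{q}_2)_a$, invokes complete reducibility, observes that only $\mathcal{L}^q_2(2\epsilon_1)$ (which is $8$-dimensional) has a nonzero zero weight space, and computes the action of $T_a$ and $\bar{k}_a,\bar{k}_{a+1}$ explicitly on a basis of that supermodule, finding $T_a$ has eigenvalues $-q,\,q^{-1}$ and that $T_aC_a=C_{a+1}T_a$ holds there. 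You would need to replace your ``push-through'' step with either this kind of explicit low-rank verification, or with a much more careful analysis tracking the $\bar{e}_a,\bar{f}_a$-contributions and showing they cancel on $M_0$.
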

\begin{proof}
We only check (\ref{eq:HC1}) and (\ref{eq:HC4})-(\ref{eq:HC7}). The relation (\ref{eq:HC1}) follows from the fact that 
$$k_{\bar{a}}^2=\frac{q^{2k_a}-q^{-2k_a}}{q^2-q^{-2}},$$
which acts on $M_0$ as identity. The relations (\ref{eq:HC5}) and (\ref{eq:HC7}) also hold obviously. 

In order to check (\ref{eq:HC1}) and (\ref{eq:HC6}), we set $\Uq(\mathfrak{q}_2)_a$ to be the sub-superalgebra of $\Uq(\mathfrak{q}_n)$ generated by $q^{\pm k_a}, q^{\pm k_{a+1}}, \bar{k}_{a}, \bar{k}_{a+1}, e_a, e_{a+1}, \bar{e}_{a}, \bar{e}_{a+1}, f_a, f_{a+1}, \bar{f}_{a}, \bar{f}_{a+1}$. Then $T_a, T_{a+1}, C_a, C_{a+1}$ lie in the image of $\Uq(\mathfrak{q}_2)_a$ in $\mathrm{End}(M)$. 

Now, $M$ is completely reducible as a $\Uq(\mathfrak{q}_2)_a$-supermodule. It suffices to verify (\ref{eq:HC1}) and (\ref{eq:HC6}) on the zero weight space of an irreducible highest weight $\Uq(\mathfrak{q}_2)_a$-supermodule $\mathcal{L}^q_2(\lambda)$. Note that the zero weight space of $\mathcal{L}^q_2(\lambda)$ is zero unless $\lambda=2\epsilon_1$. Hence, we only need to check (\ref{eq:HC1}) and (\ref{eq:HC6}) on the irreducible $\Uq(\mathfrak{q}_2)$-supermodule $\mathcal{L}^q_2(2\epsilon_1)$.

A straightforward computation shows that $\mathcal{L}^q_2(2\epsilon_1)$ has a basis $\{u_0,u_1,u_2, w, \bar{u}_0,\bar{u}_1,\bar{u}_2,\bar{w}\}$, on which $\Uq(\mathfrak{q}_2)_a$ acts as follows:
\begin{align*}
k_a.u_i&=q^{2-i}u_i,&
k_a.\bar{u}_0&=q^{2-i}\bar{u}_i,\\
k_a.w&=qw,&
k_a.\bar{w}&=q\bar{w},\\
k_{a+1}.u_i&=q^iu_i,&
k_{a+1}.\bar{u}_i&=q^i\bar{u}_i,\\
k_{a+1}.w&=qw,&
k_{a+1}.\bar{w}&=q\bar{w},\\
e_a.u_0&=0,&
e_a.\bar{u}_0&=0,\\
e_a.u_1&=(q+q^{-1})u_0,&
e_a.\bar{u}_1&=(q+q^{-1})\bar{u}_0,\\
e_a.u_2&=qu_1,&
e_a.\bar{u}_2&=q\bar{u}_1,\\
e_a.w&=0,&
e_a.\bar{w}&=0,\\
f_a.u_0&=u_1,&
f_a.\bar{u}_0&=\bar{u}_1,\\
f_a.u_1&=q^{-1}(q+q^{-1})u_2,&
f_a.\bar{u}_1&=q^{-1}(q+q^{-1})\bar{u}_2,\\
f_a.u_2&=0,&
f_a.\bar{u}_2&=0,\\
f_a.w&=0,&
f_a.\bar{w}&=0,\\
\bar{k}_{a}.u_0&=\bar{u}_0,&
\bar{k}_{a}.\bar{u}_0&=(q^2+q^{-2})u_0,\\
\bar{k}_{a}.u_1&=\frac{1}{q+q^{-1}}\bar{u}_1-q^2\bar{w},&
\bar{k}_{a}.\bar{u}_1&=\frac{q^2+q^{-2}}{q+q^{-1}}u_1-q^2w,\\
\bar{k}_{a}.u_2&=0,&
\bar{k}_{a}.\bar{u}_2&=0,\\
\bar{k}_{a}.w&=-\frac{q^2+q^{-2}}{q+q^{-1}}\bar{w}-\frac{2q^{-2}}{(q+q^{-1})^2}\bar{u}_1,&
\bar{k}_{a}.\bar{w}&=-\frac{1}{q+q^{-1}}w-\frac{2q^{-2}}{(q+q^{-1})^2}u_1,\\
\bar{k}_{a+1}.u_0&=0,&
\bar{k}_{a+1}.\bar{u}_0&=0,\\
\bar{k}_{a+1}.u_1&=\frac{1}{q+q^{-1}}\bar{u}_1+\bar{w},&
\bar{k}_{a+1}.\bar{u}_1&=\frac{q^2+q^{-2}}{q+q^{-1}}u_1+w,\\
\bar{k}_{a+1}.u_2&=\bar{u}_2,&
\bar{k}_{a+1}.\bar{u}_2&=(q^2+q^{-2})u_2,\\
\bar{k}_{a+1}.w&=-\frac{q^2+q^{-2}}{q+q^{-1}}\bar{w}+\frac{2}{(q+q^{-1})^2}\bar{u}_1,&
\bar{k}_{a+1}.\bar{w}&=-\frac{1}{q+q^{-1}}w+\frac{2}{(q+q^{-1})^2}u_1,\\
\bar{e}_a.u_0&=0,&
\bar{e}_a.\bar{u}_0&=0,\\
\bar{e}_a.u_1&=\bar{u}_0,&
\bar{e}_a.\bar{u}_1&=(q^2+q^{-2})u_0,\\
\bar{e}_a.u_2&=\frac{q}{q+q^{-1}}\bar{u}_1-q^3\bar{w},&
\bar{e}_a.\bar{u}_2&=q\frac{q^2+q^{-2}}{q+q^{-1}}u_1-q^3w,\\
\bar{e}_a.w&=\frac{2}{q+q^{-1}}\bar{u}_0,&
\bar{e}_a.\bar{w}&=\frac{2}{q+q^{-1}}u_0,\\
\bar{f}_a.u_0&=\frac{1}{q+q^{-1}}\bar{u}_1-q^2\bar{w},&
\bar{f}_a.\bar{u}_0&=\frac{q^2+q^{-2}}{q+q^{-1}}u_1+w,\\
\bar{f}_a.u_1&=q^{-1}\bar{u}_2,&
\bar{f}_a.\bar{u}_1&=q^{-1}(q^2+q^{-2})u_2,\\
\bar{f}_a.u_2&=0,&
\bar{f}_a.\bar{u}_2&=0,\\
\bar{f}_a.w&=-\frac{2q^{-3}}{q+q^{-1}}\bar{u}_2,&
\bar{f}_a.\bar{w}&=-\frac{2q^{-3}}{q+q^{-1}}u_2.
\end{align*}
Then the zero weight space $\mathcal{L}^q_2(2\epsilon_1)_0$ of $\mathcal{L}^q_2(2\epsilon_1)$ is spanned by $\{u_1, \bar{u}_1, w, \bar{w}\}$. Moreover, we have
$$T_au_1=-qu_1,\ \ T_a\bar{u}_1=-q\bar{u}_1,\ \ T_aw=q^{-1}w,\ \ T_a\bar{w}=q^{-1}w,$$
which implies that relations (\ref{eq:HC1}) and (\ref{eq:HC6}) for $\mathrm{HC}_{q^{-1}}(m)$.
\end{proof}

\begin{proposition}
\label{prop:Aq0wt}
Under the action $\Phi$ of $\Uq(\mathfrak{q}_m)$, the zero weight space $\mathcal{A}_q(\mathfrak{q}_n,\mathfrak{q}_m)_0$ of $\mathcal{A}_q(\mathfrak{q}_n,\mathfrak{q}_m)$ is isomorphic to $V_q^{\otimes m}$ as an $\mathrm{HC}_q(m)$-supermodule, where the $\mathrm{HC}_q(m)$-supermodule structure on $V_q^{\otimes m}$ is given by (\ref{eq:HCtensor1}) and (\ref{eq:HCtensor2}). 
\end{proposition}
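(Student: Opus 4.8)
The plan is to produce an explicit $\mathbb{C}((q))$-linear isomorphism between $\mathcal{A}_q(\mathfrak{q}_n,\mathfrak{q}_m)_0$ and $V_q^{\otimes m}$ and then check that it intertwines the $\mathrm{HC}_q(m)$-actions. First I would identify a monomial basis of the zero weight space. By Remark~\ref{rmk:Aqmn}, $\mathcal{A}_q(\mathfrak{q}_n,\mathfrak{q}_m)$ is spanned by ordered monomials $t_{a_1,b_1}\cdots t_{a_l,b_l}$ with $1\leqslant b_1\leqslant\cdots\leqslant b_l\leqslant m$ and $a_i\in I_{n|n}$; since $\Phi_{k_j}$ acts on $t_{ab}$ by the scalar $q^{\delta_{jb}}$ (up to sign, reading off $\Phi$ from (\ref{eq:Aqact}) and the $\Uq(\mathfrak{q}_m)$-action on $V_q$), the weight-zero condition $\Phi_{k_j}.f=q f$ for all $j=1,\ldots,m$ forces each column index $1,\ldots,m$ to appear exactly once. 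Thus $\mathcal{A}_q(\mathfrak{q}_n,\mathfrak{q}_m)_0$ has basis $\{t_{i_1,1}t_{i_2,2}\cdots t_{i_m,m}\mid i_1,\ldots,i_m\in I_{n|n}\}$, and the obvious candidate isomorphism is
\begin{equation*}
\vartheta:\mathcal{A}_q(\mathfrak{q}_n,\mathfrak{q}_m)_0\xrightarrow{\ \sim\ }V_q^{\otimes m},\qquad t_{i_1,1}t_{i_2,2}\cdots t_{i_m,m}\longmapsto v_{i_1}\otimes v_{i_2}\otimes\cdots\otimes v_{i_m},
\end{equation*}
a bijection on bases, hence a linear isomorphism; I would also record that it respects parities.

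Next I would transport the $\mathrm{HC}_q(m)$-action. The Clifford generators are easy: $C_b$ on $V_q^{\otimes m}$ in (\ref{eq:HCtensor2}) flips the sign of the $b$-th index with a Koszul sign, and on $\mathcal{A}_q(\mathfrak{q}_n,\mathfrak{q}_m)_0$ the corresponding operator comes from $\Phi_{\bar{k}_b}$ (using $\bar{k}_b v_j=\delta_{jb}v_{-j}$, $\bar{k}_b v_{-j}=\delta_{jb}v_j$ on $V_q$ and the super-Leibniz rule for $\Phi$ on products $t_{i_1,1}\cdots t_{i_m,m}$); matching signs is a bookkeeping check using $t_{ab}=t_{-a,-b}$. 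The real content is the braid generators $T_a$. Here the natural operator on the coordinate side is again $\Phi_{T_a}$, i.e. the action of the FRT generator $L_{a,a+1}$-type element — or, more cleanly, one uses that under $\Phi$ the right translation by $\Uq(\mathfrak{q}_m)$ on a product $t_{i_1,1}\cdots t_{i_m,m}$ is computed from the coproduct $\Delta^{\circ}(t_{ab})=\sum_c t_{ac}\otimes t_{cb}$, so the action of a single generator on the column index $b=a$ and $b=a+1$ is governed precisely by the $2\times2$ corner of the $S$-matrix. The key step is therefore to verify that $\Phi$ restricted to these two columns reproduces (\ref{eq:HCtensor1}) termwise: the $q^{\varphi(i_a,i_{a+1})}$-swap term, the $\delta_{i_a<i_{a+1}}\xi$ correction, and the $(-1)^{|i_{a+1}|}\delta_{-i_a<i_{a+1}}\xi$ "sign-flip" term all originate from the three summands of $S$. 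I would do this computation by writing $T=\sum E_{ab}\otimes t_{ab}$, using relation (\ref{eq:QCA2}) $S^{12}T^{13}T^{23}=T^{23}T^{13}S^{12}$ to move the $S$-matrix past two tensor legs $t_{i_a,a}t_{i_{a+1},a+1}$, and reading off the coefficient of each basis monomial.

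An alternative (and probably cleaner) route for the braid part is to invoke Lemma~\ref{lem:hecke}: $\mathcal{A}_q(\mathfrak{q}_n,\mathfrak{q}_m)$ is a locally finite weight $\Uq(\mathfrak{q}_m)$-supermodule under $\Phi$ (by Theorem~\ref{thm:howe} it decomposes into finite-dimensional $\mathcal{L}^q_m(\lambda)$'s), so the Saito braid operators $T_a$ and $C_b=k_{\bar b}$ already define an $\mathrm{HC}_{q^{-1}}(m)$-structure on $\mathcal{A}_q(\mathfrak{q}_n,\mathfrak{q}_m)_0$; one then only needs to match this with the $\mathrm{HC}_q(m)$-structure on $V_q^{\otimes m}$, which itself arises the same way from the $\Uq(\mathfrak{q}_n)$-module $V_q^{\otimes m}$ — but note the $q\leftrightarrow q^{-1}$ discrepancy is real, coming from the anti-automorphism $\sigma$ and the semilinear twist $\tilde{\Psi}$ built into $\mathcal{A}_q(\mathfrak{q}_n)$, so one must be careful about which Hecke-Clifford algebra acts via which set of variables, and the statement as written (an $\mathrm{HC}_q(m)$-isomorphism) should be checked to be consistent with this. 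I expect \emph{this sign/parameter bookkeeping} — getting all Koszul signs right and confirming the braid operator on the $t_{ab}$-side literally equals formula (\ref{eq:HCtensor1}), including that the parameter is $q$ and not $q^{-1}$ after the dust settles — to be the main obstacle; the underlying structural fact (weight-zero monomials $\leftrightarrow$ tensors, FRT corner $\leftrightarrow$ Hecke generator) is essentially forced.
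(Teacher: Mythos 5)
Your proposal is essentially the paper's own proof: identify the basis $\{t_{i_1,1}\cdots t_{i_m,m}\mid i_j\in I_{n|n}\}$ of the zero weight space by computing the eigenvalues of $\Phi_{k_i}$ on the spanning monomials from Remark~\ref{rmk:Aqmn}, define the basis-to-basis linear map to $V_q^{\otimes m}$, and check (the paper calls it ``a straightforward computation'') that it intertwines the $\mathrm{HC}$-structures; your second route, via Lemma~\ref{lem:hecke}, is what the paper implicitly uses, since its own proof ends by asserting an isomorphism of $\Uqi(\mathfrak{q}_n)\otimes\mathrm{HC}_{q^{-1}}(m)$-supermodules. Your flag of the $q\leftrightarrow q^{-1}$ bookkeeping is well founded and worth making explicit: the proposition's statement says $\mathrm{HC}_q(m)$, the paper's proof produces $\mathrm{HC}_{q^{-1}}(m)$ (and the proof of Theorem~\ref{thm:Sergeev} relies on the latter), and the two really differ --- the braid generator of~(\ref{eq:HCtensor1}) has eigenvalues $q,-q^{-1}$ while the Saito operator on a zero weight space has eigenvalues $-q,q^{-1}$ --- so the two $\mathrm{HC}$-structures on $V_q^{\otimes m}$ correspond only under the algebra isomorphism $T_a\mapsto -T_a$, $C_b\mapsto C_b$, a discrepancy the paper's closing ``replace $q^{-1}$ with $q$'' silently absorbs rather than from the $\sigma$-twist on the $\Uqi(\mathfrak{q}_n)$-side as you suggested.
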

\begin{proof}
The zero weight space of $\mathcal{A}_q(\mathfrak{q}_n,\mathfrak{q}_m)$ under the action $\Phi$ of $\Uq(\mathfrak{q}_m)$ is
$$\mathcal{A}_q(\mathfrak{q}_n,\mathfrak{q}_m)_0
=\{x\in\mathcal{A}_q(\mathfrak{q}_n, \mathfrak{q}_m)|\ \Phi_{k_i}(x)=qx\quad \text{for } 1\leqslant i\leqslant m\}.$$
Recall from Remark~\ref{rmk:Aqmn} that $\mathcal{A}_q(\mathfrak{q}_n,\mathfrak{q}_m)$ is spanned by 
$$\{t_{a_1,b_1}\cdots t_{a_l,b_l}|\ a_i\in I_{n|n},\,1\leqslant b_1\leqslant\cdots\leqslant b_l\leqslant m \text{ and }l\geqslant0 \}.$$
Under the action $\Phi$, 
$$\Phi_{k_i}(t_{a_1,b_1}\cdots t_{a_l,b_l})
=\Phi_{k_i}(t_{a_1,b_1})\cdots\Phi_{k_i}(t_{a_l,b_l})
=q^{\varphi(b_1,i)+\cdots+\varphi(b_l,i)}t_{a_1,b_1}\cdots t_{a_l,b_l}.$$
Hence, a monomial $t_{a_1,b_1}\cdots t_{a_l,b_l}$ is of eigenvalue $q$ with respect to $\Phi_{k_i}$ for all $i=1,\ldots,m$ if and only if $l=m$ and $b_i=i$ for $i=1,\ldots, m$. Therefore, $\mathcal{A}_q(\mathfrak{q}_n,\mathfrak{q}_m)_0$ is spanned by $\{t_{a_1,1}\cdots t_{a_m,m}|\ a_i\in I_{n|n}\}$.
Then a straightforward computation show that the $\mathbb{C}((q))$-linear map
$$\sigma: V_q^{\otimes m}\rightarrow\mathcal{A}_q(\mathfrak{q}_n,\mathfrak{q}_m)_0,\quad  v_{a_1}\otimes v_{a_2}\otimes\cdots\otimes v_{a_m}\rightarrow t_{a_1,1}t_{a_2,2}\cdots t_{a_m,m}$$
is an isomorphism of $\Uqi(\mathfrak{q}_n)\otimes \mathrm{HC}_{q^{-1}}(m)$-supermodules.
\end{proof}

\begin{proof}[Proof of Theorem~\ref{thm:Sergeev}]
By Theorem~\ref{thm:howe}, there is a multiplicity-free decomposition of $\Uqi(\mathfrak{q}_n)\otimes\Uq(\mathfrak{q}_m)$-supermodules
$$\mathcal{A}_q(\mathfrak{q}_n,\mathfrak{q}_m)\cong\bigoplus_{\lambda\in\Lambda^+_r\cap P^+_r}\mathcal{L}^{q^{-1}}_n(\lambda)\otimes\mathcal{L}^q_m(\lambda),$$
where $r=\min(m,n)$. Considering the zero weight spaces under the action $\Phi$ of $\Uq(\mathfrak{q}_m)$, we deduce from Lemma~\ref{lem:hecke} a decomposition of $\Uqi(\mathfrak{q}_n)\otimes\mathrm{HC}_{q^{-1}}(m)$-supermodules
$$\mathcal{A}_q(\mathfrak{q}_n,\mathfrak{q}_m)_0\cong\bigoplus_{\lambda\in\Lambda^+_r\cap P^+_r}\mathcal{L}^{q^{-1}}_n(\lambda)\otimes\mathcal{L}^q_m(\lambda)_0.$$
Now, Proposition~\ref{prop:Aq0wt} ensures that $\mathcal{A}_q(\mathfrak{q}_n,\mathfrak{q}_m)_0$ is isomorphic to $V_q^{\otimes m}$ as a $\Uqi(\mathfrak{q}_n)\otimes \mathrm{HC}_{q^{-1}}(m)$-supermodule. On the right hand side of the above decomposition, $\mathcal{L}^q_m(\lambda)_0$ vanishes unless $\lambda$ is a strict partition of $m$, in which case $\mathcal{L}^q_m(\lambda)_0$ is isomorphic to $D^{q^{-1}}_m(\lambda)$ as an $\mathrm{HC}_{q^{-1}}(m)$-supermodule. This can be proved by passing to the classical limits. The classical limit of the $\Uq(\mathfrak{q}_m)$-supermodule $\mathcal{L}^{q}_m(\lambda)$ is the $\mathrm{U}(\mathfrak{q}_m)$-supermodule $\mathbb{L}_m(\lambda)$, whose zero weight space has been shown to be the irreducible module over the Sergeev superalgebra $\mathrm{HC}_1(m)$ determined by $\lambda$. Now, we observe that $\mathcal{L}^{q}_m(\lambda)_0$ is an irreducible $\mathrm{HC}_{q^{-1}}(m)$-supermodule since the $\mathrm{HC}_1(m)$-supermodule $\mathbb{L}_m(\lambda)_0$ is irreducible. Hence, $\mathcal{L}^{q}_m(\lambda)_0$ is isomorphic to $D^{q^{-1}}_m(\lambda')$ for some strict partition $\lambda'$ of $m$. The partition $\lambda'$ of $m$ should equal $\lambda$ since $\mathcal{L}^{q}_m(\lambda)_0$ has the same character with $\mathbb{L}_m(\lambda)$.

Therefore, we obtain a multiplicity-free decomposition of $\Uqi(\mathfrak{q}_n)\otimes\mathrm{HC}_{q^{-1}}(m)$-supermodules
$$V_q^{\otimes m}=\bigoplus_{\lambda\in\mathrm{SP}(m)\atop\ell(\lambda)\leqslant n}\mathcal{L}^{q^{-1}}_n(\lambda)\otimes D^{q^{-1}}_m(\lambda).$$
Replace $q^{-1}$ with $q$, we obtain the desired decomposition.
\end{proof}

\section*{Acknowledgements}
We would like to express our debt to Chih-Whi Chen, Shun-Jen Cheng, Myungho Kim and Jinkui Wan for their suggestions. The paper was partially written up during the first author visit to NCTS in Taipei, from which we gratefully acknowledge the support and excellent working environment where most of this work was completed. Z. Chang also thanks the support of National Natural Science Foundation of China No. 11501213 and the China Postdoctoral Science Foundation No. 2016T90774.

\end{document}